\documentclass{amsart}
\usepackage{amsfonts}

\setcounter{MaxMatrixCols}{10}

\newtheorem{theorem}{Theorem}
\theoremstyle{plain}
\newtheorem{acknowledgement}{Acknowledgement}

\newtheorem{definition}{Definition}

\newtheorem{lemma}{Lemma}

\newtheorem{remark}{Remark}

\numberwithin{equation}{section}
\input{tcilatex}

\begin{document}
\title[SOME CONVERGENCE AND STABILITY RESULTS FOR TWO KIRK TYPE FIXED POINT
ITERATIVE ALGORITHMS]{SOME CONVERGENCE AND STABILITY RESULTS FOR THE KIRK
MULTISTEP AND KIRK-SP FIXED POINT ITERATIVE ALGORITHMS FOR CONTRACTIVE-LIKE
OPERATORS IN NORMED LINEAR SPACES}
\author{FA\.{I}K G\"{U}RSOY}
\address{Department of Mathematics, Yildiz Technical University, Davutpasa
Campus, Esenler, 34220 Istanbul, Turkey}
\email{faikgursoy02@hotmail.com;fgursoy@yildiz.edu.tr}
\urladdr{http://www.yarbis.yildiz.edu.tr/fgursoy}
\author{VATAN KARAKAYA}
\address{Department of Mathematical Engineering, Yildiz Technical
University, Davutpasa Campus, Esenler, 34210 \.{I}stanbul, Turkey}
\email{vkkaya@yildiz.edu.tr;vkkaya@yahoo.com}
\urladdr{http://www.yarbis.yildiz.edu.tr/vkkaya}
\author{B. E. RHOADES}
\address{Department of Mathematics, Indiana University, Bloomington, IN
47405-7106, USA}
\email{rhoades@indiana.edu}
\urladdr{http://www.math.indiana.edu/people/profile.phtml?id=rhoades}
\date{April 2013}
\subjclass[2000]{Primary 47H06, 54H25.}
\keywords{Kirk multistep iterative scheme, Kirk-SP iterative scheme,
Contractive-like operators, Convergence, Stability.}

\begin{abstract}
The purpose of this paper is to introduce a new Kirk type iterative
algorithm called Kirk multistep iteration and to study its convergence. We
also prove some theorems related with the stability results for the
Kirk-multistep and Kirk-SP iterative processes by employing certain
contractive-like operators. Our results generalize and unify some other
results in the literature.
\end{abstract}

\maketitle

\section{Introduction and Preliminaries}

This article is organized as follows. Section 1 outlines some known
contractive mappings and iterative schemes and collects some preliminaries
that will be used in the proofs of our main results. We then propose a new
Kirk type iterative process called Kirk multistep iteration. Section 2
presents a result dealing with the convergence of this new iterative
procedure, which is unifies and extends some other iterative schemes in the
existing literature. Also we prove some theorems related to the stability of
the Kirk multistep and Kirk-SP iterative processes by employing certain
contractive-like operators.

Fixed point iterations are commonly used to solve nonlinear equations
arising in physical systems. Such equations can be transform into a fixed
point equation $Tx=x$ which is solved by some iterative processes of form $%
x_{n+1}=f\left( T,x_{n}\right) $, $n=0,1,2,\ldots $, that converges to a
fixed point of $T$. This is a reason, among a number of reasons, why there
is presently a great deal of interest in the introduction and development of
various iterative algorithms. Consequently iteration schemes abound in the
literature of fixed point theory, for which fixed points of operators have
been approximated over the years by various authors, e.g., \cite{Agarwal,
Das, New, Ishikawa, Mann, Noor, SP, RS7, Takahashi, Thianwan}.

As a background to our exposition, we describe some iteration schemes and
contractive type mappings.

Throughout this paper $%
\mathbb{N}
$ denotes the nonnegative integers, including zero. Let $\left\{ \alpha
_{n}\right\} _{n=0}^{\infty }$, $\left\{ \beta _{n}\right\} _{n=0}^{\infty }$%
,$\left\{ \gamma _{n}\right\} _{n=0}^{\infty }$ and $\left\{ \beta
_{n}^{i}\right\} _{n=0}^{\infty }$, $i=\overline{1,k-2}$, $k\geq 2$ be real
sequences in $\left[ 0,1\right) $ satisfying certain conditions.

Rhoades and \c{S}oltuz \cite{RS7}, introduced a multistep iterative
algorithm by%
\begin{equation}
\left\{ 
\begin{array}{c}
x_{0}\in E\text{, \ \ \ \ \ \ \ \ \ \ \ \ \ \ \ \ \ \ \ \ \ \ \ \ \ \ \ \ \
\ \ \ \ \ \ \ \ \ \ \ \ \ \ \ \ } \\ 
x_{n+1}=\left( 1-\alpha _{n}\right) x_{n}+\alpha _{n}Ty_{n}^{1}\text{, \ \ \
\ \ \ \ \ \ \ \ \ \ \ \ \ \ } \\ 
y_{n}^{i}=\left( 1-\beta _{n}^{i}\right) x_{n}+\beta _{n}^{i}Ty_{n}^{i+1}%
\text{, \ \ \ \ \ \ \ \ \ \ \ } \\ 
y_{n}^{k-1}=\left( 1-\beta _{n}^{k-1}\right) x_{n}+\beta
_{n}^{k-1}Tx_{n},~n\in 
\mathbb{N}
\text{.}%
\end{array}%
\right.  \label{eqn1}
\end{equation}%
The following multistep iteration was employed in \cite{New, FVB}%
\begin{equation}
\left\{ 
\begin{array}{c}
x_{0}\in E\text{, \ \ \ \ \ \ \ \ \ \ \ \ \ \ \ \ \ \ \ \ \ \ \ \ \ \ \ \ \
\ \ \ \ \ \ \ \ \ \ \ \ \ \ \ \ } \\ 
x_{n+1}=\left( 1-\alpha _{n}\right) y_{n}^{1}+\alpha _{n}Ty_{n}^{1}\text{, \
\ \ \ \ \ \ \ \ \ \ \ \ \ \ \ \ } \\ 
y_{n}^{i}=\left( 1-\beta _{n}^{i}\right) y_{n}^{i+1}+\beta
_{n}^{i}Ty_{n}^{i+1}\text{, \ \ \ \ \ \ \ \ } \\ 
y_{n}^{k-1}=\left( 1-\beta _{n}^{k-1}\right) x_{n}+\beta
_{n}^{k-1}Tx_{n},~n\in 
\mathbb{N}
\text{.}%
\end{array}%
\right.  \label{eqn2}
\end{equation}%
By taking $k=3$ and $k=2$ in (1.1) we obtain the well-known Noor \cite{Noor}
and Ishikawa \cite{Ishikawa} iterative schemes, respectively. SP iteration 
\cite{SP} and a new two-step iteration \cite{Thianwan} processes are
obtained by taking $k=3$ and $k=2$ in (1.2), respectively. Both in (1.1) and
in (1.2), if we take $k=2$ with $\beta _{n}^{1}=0$ and $k=2$ with $\beta
_{n}^{1}\equiv 0$, $\alpha _{n}\equiv \lambda $ (const.), then we get the
iterative procedures introduced in \cite{Mann} and \cite{Krasnoselskij},
which are commonly known as the Mann and Krasnoselskij iterations,
respectively. The Krasnoselskij iteration reduces to the Picard iteration 
\cite{Picard} for $\lambda =1$.

The Kirk -SP iterative scheme \cite{Hussain et al} is defined by%
\begin{equation}
\left\{ 
\begin{array}{c}
x_{n+1}=\sum_{i_{1}=0}^{s_{1}}\alpha _{n,i_{1}}T^{i_{1}}y_{n}^{1}\text{, \ \ 
}\sum_{i_{1}=0}^{s_{1}}\alpha _{n,i_{1}}=1\text{, \ \ \ \ \ \ \ \ \ \ \ \ \
\ \ \ \ \ \ \ \ \ } \\ 
y_{n}^{1}=\sum_{i_{2}=0}^{s_{2}}\beta _{n,i_{2}}^{1}T^{i_{2}}y_{n}^{2}\text{%
, \ \ }\sum_{i_{2}=0}^{s_{2}}\beta _{n,i_{2}}^{1}=1\text{, \ \ \ \ \ \ \ \ \
\ \ \ \ \ \ \ \ \ \ } \\ 
y_{n}^{2}=\sum_{i_{3}=0}^{s_{3}}\beta _{n,i_{3}}^{2}T^{i_{3}}x_{n}\text{, \
\ }\sum_{i_{3}=0}^{s_{3}}\beta _{n,i_{3}}^{2}=1\text{, \ \ }\forall n\in 
\mathbb{N}
\text{, \ \ \ \ \ \ }%
\end{array}%
\right.  \label{eqn3}
\end{equation}%
where $s_{1}$, $s_{2}$, and $s_{3}$ are fixed integers with $s_{1}\geq
s_{2}\geq s_{3}$ and $\alpha _{n,i_{1}}$, $\beta _{n,i_{2}}^{1}$, $\beta
_{n,i_{3}}^{2}$ are sequences in $\left[ 0,1\right] $ satisfying $\alpha
_{n,i_{1}}\geq 0$, $\alpha _{n,0}\neq 0$, $\beta _{n,i_{2}}^{1}\geq 0$, $%
\beta _{n,0}^{1}\neq 0$,$\ \beta _{n,i_{3}}^{2}\geq 0$, $\beta
_{n,0}^{2}\neq 0$.

Let $X$ be an arbitrary Banach space and $T:X\rightarrow X$ be mapping.

We shall introduce and employ the following iterative scheme, which is
called a Kirk-multistep iteration:%
\begin{equation}
\left\{ 
\begin{array}{c}
x_{0}\in X\text{, \ \ \ \ \ \ \ \ \ \ \ \ \ \ \ \ \ \ \ \ \ \ \ \ \ \ \ \ \
\ \ \ \ \ \ \ \ \ \ \ \ \ \ \ \ \ \ \ \ \ \ \ \ \ \ \ \ \ \ \ \ \ \ } \\ 
x_{n+1}=\alpha _{n,0}x_{n}+\sum\limits_{i_{1}=1}^{s_{1}}\alpha
_{n,i_{1}}T^{i_{1}}y_{n}^{1}\text{, \ \ \ \ \ \ \ \ \ \ \ \ \ \ \ \ \ \ \ \
\ \ \ \ \ \ \ \ \ \ \ } \\ 
y_{n}^{p}=\beta _{n,0}^{p}x_{n}+\sum\limits_{i_{p+1}=1}^{s_{p+1}}\beta
_{n,i_{p+1}}^{p}T^{i_{p+1}}y_{n}^{p+1}\text{, \ }p=\overline{1,k-2}\text{,}
\\ 
y_{n}^{k-1}=\sum\limits_{i_{k}=0}^{s_{k}}\beta _{n,i_{k}}^{k-1}T^{i_{k}}x_{n}%
\text{, \ }k\geq 2\text{, }\forall n\in 
\mathbb{N}
\text{, \ \ \ \ \ \ \ \ \ \ \ \ \ \ \ \ \ \ \ \ \ }%
\end{array}%
\right.  \label{eqn4}
\end{equation}%
where $\sum\limits_{i_{1}=0}^{s_{1}}\alpha _{n,i_{1}}=1$, $%
\sum\limits_{i_{p+1}=0}^{s_{p+1}}\beta _{n,i_{p+1}}^{p}=1$ for $p=\overline{%
1,k-1}$; $\alpha _{n,i_{1}}$, $\beta _{n,i_{p+1}}^{p}$ are sequences in $%
\left[ 0,1\right] $ satisfying $\alpha _{n,i_{1}}\geq 0$, $\alpha _{n,0}\neq
0$, $\beta _{n,i_{p+1}}^{p}\geq 0$, $\beta _{n,0}^{p}\neq 0$ for $p=%
\overline{1,k-1}$ and $s_{1}$, $s_{p+1}$ for $p=\overline{1,k-1}$ are fixed
integers with $s_{1}\geq s_{2}\geq \cdots \geq s_{k}$.

By taking $k=3$, $k=2$ and $k=2$ with $s_{2}=0$ in (1.4) we obtain the
Kirk-Noor \cite{CR}, the Kirk-Ishikawa \cite{Olatinwo} and the Kirk-Mann 
\cite{Olatinwo} iterative schemes, respectively. Also,(1.4) gives the usual
Kirk iterative process \cite{Kirk} for $k=2$, with $s_{2}=0$ and $\alpha
_{n,i_{1}}=\alpha _{i_{1}}$. If we put $s_{1}=1$ and $s_{p+1}=1$, $p=%
\overline{1,k-1}$ in (1.4), then we have the usual multistep iteration (1.1)
with $\sum_{i_{1}=0}^{1}\alpha _{n,i_{1}}=1$, $\alpha _{n,1}=\alpha _{n}$, $%
\sum_{i_{p+1}=0}^{1}\beta _{n,i_{p+1}}^{p}=1$, $\beta _{n,1}^{p}=\beta
_{n}^{p}$, $p=\overline{1,k-1}$. The Noor iteration \cite{Noor},\ the
Ishikawa iteration \cite{Ishikawa}, the Mann iteration \cite{Mann}, the
Krasnoselskij iteration \cite{Krasnoselskij} and the Picard iteration \cite%
{Picard} schemes are special cases of the multistep iterative scheme (1.1),
as explained above. So, we conclude that these are special cases of the
Kirk-multistep iterative scheme (1.4).

A particular fixed point iteration generates a theoretical sequence $\left\{
x_{n}\right\} _{n=0}^{\infty }$. In applications, various errors (for
example round-off or discretization of the function $T$ etc.) occur during
computation of the sequence $\left\{ x_{n}\right\} _{n=0}^{\infty }$.
Because of these errors we cannot obtain the theoretical sequence $\left\{
x_{n}\right\} _{n=0}^{\infty }$, but an approximate sequence $\left\{
y_{n}\right\} _{n=0}^{\infty }$ instead. We shall say that the iterative
process is $T$-stable or stable with respect to $T$ if and only if $\left\{
x_{n}\right\} _{n=0}^{\infty }$ converges to a fixed point $q$ of $T$, then $%
\left\{ y_{n}\right\} _{n=0}^{\infty }$ converges to $q=Tq$.

The initiator of this kind study is M. Urabe \cite{Urabe} while a formal
definition for the stability of general iterative schemes is given by Harder
and Hicks \cite{HarHic1, HarHic2} as follows:

\begin{definition}
Let $\left( X,d\right) $ be a complete metric space, $T$ a self map of $X$.
Suppose that $F_{T}=\left\{ q\in X:q=Tq\right\} $ is the set of fixed points
of $T$. Let $\left\{ x_{n}\right\} _{n=0}^{\infty }\subset X$ be a sequence
generated by an iterative process defined by 
\begin{equation}
x_{n+1}=f\left( T,x_{n}\right) ,n=0,1,\ldots \text{,}  \label{eqn5}
\end{equation}%
where $x_{0}\in X$ is the initial approximation and $f$ is some function.
Let $\left\{ y_{n}\right\} _{n=0}^{\infty }\subset X$ be an arbitrary
sequence and set $\varepsilon _{n}=d\left( y_{n+1},f\left( T,y_{n}\right)
\right) $, $n=0,1,\ldots $. Then, the iterative process (1.5) is said to be
\ $T$-stable or stable with respect to $T$ if and only if $%
\lim_{n\rightarrow \infty }\varepsilon _{n}=0\Rightarrow \lim_{n\rightarrow
\infty }y_{n}=q$.
\end{definition}

In the last three decades, a large literature has developed dealing with the
stability of various well-known iterative schemes for different classes of
operators. Several authors who have made contributions to the study of
stability of fixed point iterative procedures are Ostrowski \cite{Ostrowski}%
, Harder \cite{Harder}, Harder and Hicks \cite{HarHic1, HarHic2}, Rhoades 
\cite{Rhds3, Rhds4}, Berinde \cite{Berinde1, Berinde2}, Osilike \cite%
{Osilike1, Osilike2}, Osilike and Udomene \cite{Osilike}, Olatinwo \cite%
{Olatinwo, Olatinwo1, Olatinwo2}, Chugh and Kumar \cite{CR}, and several
references contained therein.

A pioneering result on the stability of iterative procedures established in
metric space and normed linear space for the Picard iteration is due to
Ostrowski \cite{Ostrowski}, which states that: \textit{Let }$(X,d)$\textit{\
be a complete metric space and }$T:X\rightarrow X$\textit{\ a Banach
contraction mapping, i.e., }%
\begin{equation}
d(Tx,Ty)\leq \lambda d(x,y)\text{ \textit{for all} }x,y\in X\text{,}
\label{eqn6}
\end{equation}%
\textit{where }$\lambda \in \left[ 0,1\right) $\textit{. Let }$q\in X$%
\textit{\ be the fixed point of }$T$\textit{, }$x_{0}\in X$\textit{\ and }$%
x_{n+1}=Tx_{n}$\textit{, }$n=0,1,2,...$\textit{. Suppose that }$%
\{y_{n}\}_{n=0}^{\infty }$\textit{\ is a sequence in }$X$\textit{\ and }$%
\varepsilon _{n}=d\left( y_{n+1},Ty_{n}\right) $\textit{. Then}%
\begin{equation}
d\left( q,y_{n+1}\right) \leq d\left( q,x_{n+1}\right) +\lambda
^{n+1}d\left( x_{0},y_{0}\right) +\sum\limits_{i=0}^{n}\lambda
^{n-r}\varepsilon _{i}\text{.}  \label{eqn7}
\end{equation}%
\textit{Moreover, }$\lim_{n\rightarrow \infty }y_{n}=q\Leftrightarrow
\lim_{n\rightarrow \infty }\varepsilon _{n}=0$\textit{.}

Using Definition 1, Harder and Hicks \cite{HarHic1, HarHic2} proved some
stability theorems for well-known Picard, Mann and Kirk's iterations by
employing several classes of contractive type operators. Rhoades \cite%
{Rhds3, Rhds4} extended the results of Harder and Hicks \cite{HarHic2} by
utilizing the following two different classes of contractive operators of
Ciric's type, respectively: there exists a $\lambda \in \left[ 0,1\right) $
such that for each pair $x,y\in X$%
\begin{equation}
d\left( Tx,Ty\right) \leq \lambda \max \left\{ d\left( x,y\right) ,d\left(
x,Ty\right) ,d\left( y,Tx\right) \right\} \text{,}  \label{eqn8}
\end{equation}%
and%
\begin{equation}
d\left( Tx,Ty\right) \leq \lambda \max \left\{ d\left( x,y\right) ,\left\{
d\left( x,Tx\right) +d\left( y,Ty\right) \right\} /2,d\left( x,Ty\right)
,d\left( y,Tx\right) \right\} \text{.}  \label{eqn9}
\end{equation}%
Later Osilike \cite{Osilike1} further generalized and extended some of the
results in \cite{Rhds3} by using a large class of contractive type operators 
$T$ satisfying the following condition, which is more general than those of
Rhoades \cite{Rhds3, Rhds4} and Harder and Hicks \cite{HarHic2}:%
\begin{equation}
d(Tx,Ty)\leq Ld(x,Tx)+\lambda d(x,y)\text{,}  \label{eqn10}
\end{equation}%
for some $\lambda \in \lbrack 0,1)$, $L\geq 0$ and for all $x,y\in X$.

By employing the contractive condition (1.10), Osilike and Udomene proved
some stability results for the Picard, Ishikawa and Kirk's iteration in \cite%
{Osilike} where a new and shorter method than those mentioned above was
used. Using the same method of proof as in \cite{Osilike}, Berinde \cite%
{Berinde2} again established the stability results in Harder and Hicks \cite%
{HarHic2}.

In \cite{Imoru}, Imoru and Olatinwo extended some of the stability results
of \cite{Berinde2, HarHic2, Osilike, Osilike1, Rhds3, Rhds4} by employing a
much more general class of operators $T$ satisfying the following
contractive condition:%
\begin{equation}
d(Tx,Ty)\leq \varphi (d(x,Tx))+\lambda d(x,y)\text{, }\forall x\text{, }y\in
X\text{,}  \label{eqn11}
\end{equation}%
where $\lambda \in \lbrack 0,1)$ and $\varphi :%
\mathbb{R}
^{+}\rightarrow 
\mathbb{R}
^{+}$ is a monotone increasing function with $\varphi (0)=0$.

\begin{remark}
\cite{New, FVB} A map satisfying (1.11) need not have a fixed point.
However, using (1.11), it is obvious that if T has a fixed point, then it is
unique.
\end{remark}

Continuing the above mentioned trend, Olatinwo \cite{Olatinwo} studied the
stability of the Kirk-Mann and Kirk-Ishikawa iterative processes by
utilizing contractive condition (1.11). The results of \cite{Olatinwo} are
generalizations of some of the results of \cite{Berinde2, HarHic2,
Olantiwo3, Olantiwo4, Olantiwo5, Osilike, Osilike1, Rhds3, Rhds4}.

Recently Chugh and Kumar \cite{CR} improved and extended the results of \cite%
{Olatinwo}, and some of the references cited therein, by introducing the
Kirk-Noor iterative algorithm.

We end this section with some lemmas which will be useful in proving our
main results.

\begin{lemma}
\cite{Berinde3} If $\sigma $ is a real number such that $\sigma \in \left[
0,1\right) $, and $\left\{ \varepsilon _{n}\right\} _{n=0}^{\infty }$ is a
sequence of nonegative numbers such that $\lim_{n\rightarrow \infty
}\varepsilon _{n}=0$, then, for any sequence of positive numbers $\left\{
u_{n}\right\} _{n=0}^{\infty }$ satisfying%
\begin{equation}
u_{n+1}\leq \sigma u_{n}+\varepsilon _{n}\text{, }\forall n\in 
\mathbb{N}
\text{,}  \label{eqn12}
\end{equation}%
we have $\lim_{n\rightarrow \infty }u_{n}=0$.
\end{lemma}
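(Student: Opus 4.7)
The plan is to iterate the one-step recurrence and then carefully estimate the resulting sum, using the decay of $\sigma^n$ and the convergence $\varepsilon_n\to 0$ on two separate ranges of indices.

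First I would apply the hypothesis inductively: from $u_{n+1}\le \sigma u_n+\varepsilon_n$, a straightforward induction yields
\begin{equation*}
u_{n+1}\le \sigma^{n+1} u_0 + \sum_{k=0}^{n}\sigma^{\,n-k}\varepsilon_k,\qquad n\in\mathbb{N}.
\end{equation*}
Since $\sigma\in[0,1)$, the term $\sigma^{n+1}u_0$ tends to $0$ as $n\to\infty$, so the entire task reduces to showing that the convolution-type sum $S_n:=\sum_{k=0}^{n}\sigma^{\,n-k}\varepsilon_k$ tends to $0$.

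To handle $S_n$, I would use a standard two-scale argument. Given $\epsilon>0$, pick $N$ large enough that $\varepsilon_k<\epsilon(1-\sigma)/2$ for every $k\ge N$, which is possible because $\varepsilon_k\to 0$. Then split
\begin{equation*}
S_n=\sum_{k=0}^{N-1}\sigma^{\,n-k}\varepsilon_k+\sum_{k=N}^{n}\sigma^{\,n-k}\varepsilon_k.
\end{equation*}
The tail sum is bounded by $\tfrac{\epsilon(1-\sigma)}{2}\sum_{j=0}^{n-N}\sigma^{j}\le \tfrac{\epsilon(1-\sigma)}{2}\cdot\tfrac{1}{1-\sigma}=\epsilon/2$, uniformly in $n$. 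The head sum has fixed length $N$ and factor $\sigma^{n-k}\to 0$ as $n\to\infty$ for each $k<N$, so it is $<\epsilon/2$ for all $n$ sufficiently large. Combining, $S_n<\epsilon$ eventually, hence $S_n\to 0$, and therefore $u_n\to 0$.

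The proof is essentially routine; the only subtlety is the two-scale split, which is needed because one cannot simply pull out a uniform bound on $\varepsilon_k$ that works on the whole range $0\le k\le n$. If one prefers to avoid $\epsilon$-$N$ bookkeeping, an equivalent route is to invoke a Toeplitz-type lemma on the weights $w_{n,k}=\sigma^{n-k}$, which satisfy $\sum_{k=0}^{n}w_{n,k}\le 1/(1-\sigma)$ and $w_{n,k}\to 0$ for each fixed $k$; but the direct argument above is self-contained and is the one I would write out.
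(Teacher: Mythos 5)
Your proof is correct. Note that the paper itself gives no proof of this lemma: it is quoted from Berinde's monograph as a known auxiliary result, so there is nothing internal to compare against. The argument you give --- iterate the recurrence to get $u_{n+1}\leq \sigma^{n+1}u_{0}+\sum_{k=0}^{n}\sigma^{n-k}\varepsilon_{k}$, then kill the convolution sum by splitting at a threshold $N$ beyond which $\varepsilon_{k}$ is small (tail controlled by the geometric series $\sum\sigma^{j}\leq 1/(1-\sigma)$, head a finite sum with vanishing weights $\sigma^{n-k}$) --- is exactly the classical proof in the cited source, and the two-scale split is indeed the one non-trivial step. The only cosmetic remark is that the case $\sigma=0$ is degenerate but harmless, since then $S_{n}=\varepsilon_{n}\rightarrow 0$ directly.
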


\begin{lemma}
\cite{Olatinwo} Let $(X$, $\left\Vert \cdot \right\Vert )$ be a normed
linear space and let $T$ be a selfmap of $X$ satisfying (1.11). Let $\varphi
:%
\mathbb{R}
^{+}\rightarrow 
\mathbb{R}
^{+}$ be a subadditive, monotone increasing function such that $\varphi
(0)=0 $, $\varphi (Lu)\leq L\varphi (u)$, $L\geq 0$, $u\in 
\mathbb{R}
^{+}$. Then, $\forall i\in 
\mathbb{N}
$, $L\geq 0$ and $\forall x$, $y\in X$%
\begin{equation}
\left\Vert T^{i}x-T^{i}y\right\Vert \leq \sum\limits_{j=1}^{i}\binom{i}{j}%
a^{i-j}\varphi ^{j}\left( \left\Vert x-Tx\right\Vert \right)
+a^{i}\left\Vert x-y\right\Vert \text{.}  \label{eqn13}
\end{equation}
\end{lemma}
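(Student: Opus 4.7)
The plan is to establish the inequality by induction on $i$, using the contractive condition (1.11) together with the hypotheses that $\varphi$ is subadditive, monotone increasing and satisfies $\varphi(Lu)\le L\varphi(u)$. (I read the symbol $a$ in the statement as the contraction constant $\lambda$ from (1.11).)

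For the base case, $i=0$ gives $\|x-y\|\le\|x-y\|$ (the sum is empty) and $i=1$ is exactly (1.11). For the inductive step, assume the inequality holds at level $i$. Writing $T^{i+1}x-T^{i+1}y = T(T^ix)-T(T^iy)$ and applying (1.11) gives
\[
\|T^{i+1}x-T^{i+1}y\|\le \varphi\!\left(\|T^ix-T^{i+1}x\|\right)+a\,\|T^ix-T^iy\|.
\]
To the second term I apply the induction hypothesis directly. For the first term I use the induction hypothesis with $y$ replaced by $Tx$, which bounds $\|T^ix-T^{i+1}x\|$ by $\sum_{j=1}^{i}\binom{i}{j}a^{i-j}\varphi^{j}(\|x-Tx\|)+a^{i}\|x-Tx\|$; then subadditivity of $\varphi$ together with $\varphi(Lu)\le L\varphi(u)$ pushes $\varphi$ through the sum and the scalar factors, producing
\[
\varphi\!\left(\|T^ix-T^{i+1}x\|\right)\le \sum_{j=1}^{i}\binom{i}{j}a^{i-j}\varphi^{\,j+1}(\|x-Tx\|)+a^{i}\varphi(\|x-Tx\|).
\]

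Adding the two estimates and reindexing the shifted sum ($j\mapsto j-1$), I would collect coefficients of $a^{(i+1)-j}\varphi^{j}(\|x-Tx\|)$ for each $j\in\{1,\dots,i+1\}$. For $2\le j\le i$ the coefficient equals $\binom{i}{j-1}+\binom{i}{j}$, which by Pascal's identity is $\binom{i+1}{j}$; the $j=1$ contribution is $(i+1)a^{i}=\binom{i+1}{1}a^{i}$ (combining the lone $a^{i}\varphi(\|x-Tx\|)$ from the $\varphi$-estimate with $\binom{i}{1}a^{i}\varphi(\|x-Tx\|)$ from the induction hypothesis), and the $j=i+1$ contribution is $\binom{i}{i}=\binom{i+1}{i+1}$. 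Finally the residual $\|x-y\|$-term is $a\cdot a^{i}\|x-y\|=a^{i+1}\|x-y\|$, which completes the induction.

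The routine part is the algebraic bookkeeping with binomial coefficients; the only genuine content is the transfer of $\varphi$ across the inductive bound on $\|T^ix-T^{i+1}x\|$, which is exactly where the subadditivity and homogeneity-type assumptions $\varphi(u+v)\le\varphi(u)+\varphi(v)$ and $\varphi(Lu)\le L\varphi(u)$ are used. I expect this to be the main, and indeed the only nontrivial, obstacle in the argument.
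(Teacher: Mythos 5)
Your induction is correct and complete: the base cases check out, the inductive step correctly applies (1.11) to the pair $(T^ix,T^iy)$, uses the induction hypothesis both for $\|T^ix-T^iy\|$ and (with $y$ replaced by $Tx$) for $\|T^ix-T^{i+1}x\|$, transfers $\varphi$ across that bound via monotonicity, subadditivity and $\varphi(Lu)\le L\varphi(u)$, and closes with Pascal's identity. The paper itself states this lemma without proof, citing Olatinwo, and the argument you give is essentially the standard one from that source, so there is nothing to compare beyond noting that your reading of $a$ as the constant $\lambda$ of (1.11) is the intended one (cf.\ Remark 3).
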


\begin{remark}
Note that $a\in \left[ 0,1\right) $ in the equation (1.13).
\end{remark}

\section{Main Results}

For simplicity we assume in the following three theorems that $X$ is a
normed linear space, $T$ is a self map of $X$ satisfying the contractive
condition (1.11) with $F_{T}\neq \emptyset $, and $\varphi :%
\mathbb{R}
^{+}\rightarrow 
\mathbb{R}
^{+}$ is a subadditive monotone increasing function such that $\varphi
\left( 0\right) =0$ and $\varphi \left( Lu\right) \leq L\varphi \left(
u\right) $, $L\geq 0$, $u\in 
\mathbb{R}
^{+}$.

\begin{theorem}
Let $\left\{ x_{n}\right\} _{n\in 
\mathbb{N}
}$ be a sequence generated by the Kirk-multistep iterative scheme (1.4).
Suppose that $T$ has a fixed point $q$. Then the iterative sequence $\left\{
x_{n}\right\} _{n\in 
\mathbb{N}
}$ converges strongly to $q$.
\end{theorem}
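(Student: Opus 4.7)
The starting observation is that the contractive-like condition (1.11) behaves like a pure $\lambda$-contraction along the fixed point $q$, because $\|q-Tq\|=0$ forces $\varphi^{j}(\|q-Tq\|)=0$ for every $j\ge 1$. Applying Lemma~2 with $x=q$ therefore collapses the binomial sum in (1.13) and gives
\begin{equation*}
\|q-T^{i}y\|=\|T^{i}q-T^{i}y\|\le\lambda^{i}\|q-y\|,\qquad i\in\mathbb{N},\;y\in X.
\end{equation*}
This is the only analytic ingredient of the proof; everything else is bookkeeping along the levels of (1.4).

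Using this inequality, I would estimate the innermost iterate first: writing $q=\sum_{i_{k}=0}^{s_{k}}\beta_{n,i_{k}}^{k-1}q$, distributing the norm across the convex combination, and bounding $\lambda^{i_{k}}\le\lambda$ for $i_{k}\ge 1$ together with $\sum_{i_{k}\ge 1}\beta_{n,i_{k}}^{k-1}=1-\beta_{n,0}^{k-1}$, I obtain
\begin{equation*}
\|y_{n}^{k-1}-q\|\le\bigl[\beta_{n,0}^{k-1}+\lambda(1-\beta_{n,0}^{k-1})\bigr]\|x_{n}-q\|\le\|x_{n}-q\|.
\end{equation*}
The plan is then to run a finite induction on $p$ decreasing from $k-2$ down to $1$: the analogous manipulation applied to the line $y_{n}^{p}=\beta_{n,0}^{p}x_{n}+\sum_{i_{p+1}\ge 1}\beta_{n,i_{p+1}}^{p}T^{i_{p+1}}y_{n}^{p+1}$, combined with the inductive bound $\|y_{n}^{p+1}-q\|\le\|x_{n}-q\|$, produces $\|y_{n}^{p}-q\|\le\|x_{n}-q\|$ at every level $p=1,\dots,k-1$.

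Applying the identical calculation to the top line of (1.4) then yields the one-step contraction
\begin{equation*}
\|x_{n+1}-q\|\le\alpha_{n,0}\|x_{n}-q\|+\lambda(1-\alpha_{n,0})\|y_{n}^{1}-q\|\le\bigl[1-(1-\lambda)(1-\alpha_{n,0})\bigr]\|x_{n}-q\|.
\end{equation*}
Iterating the recurrence and using $1-t\le e^{-t}$ gives $\|x_{n+1}-q\|\le\|x_{0}-q\|\exp\!\bigl\{-(1-\lambda)\sum_{j=0}^{n}(1-\alpha_{j,0})\bigr\}$, which tends to $0$ under the standard divergence condition $\sum_{j=0}^{\infty}(1-\alpha_{j,0})=\infty$ that is implicit in any Mann/Kirk-type convergence statement (without it, the iteration stagnates on $x_{n}$).

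\textbf{Main obstacle.} The analytic content of the argument is concentrated in the opening observation; the bulk of the work is combinatorial, namely chaining the $k-1$ backward estimates cleanly so that each inner quantity $\|y_{n}^{p}-q\|$ is controlled by $\|x_{n}-q\|$ with a coefficient that stays in $[0,1]$. I expect the decreasing induction over the levels $p$ to be where the positivity hypotheses ($\beta_{n,0}^{p}\ne 0$ and $\sum_{i_{p+1}}\beta_{n,i_{p+1}}^{p}=1$) are essential and where a careless bound would let the product of level coefficients exceed $1$ and break the convex-combination structure.
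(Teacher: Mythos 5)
Your opening move is exactly the paper's: apply Lemma 2 with $x=q$, so that $\varphi ^{j}\left( \left\Vert q-Tq\right\Vert \right) =\varphi ^{j}(0)=0$ kills the binomial sum and leaves $\left\Vert T^{i}y-T^{i}q\right\Vert \leq a^{i}\left\Vert y-q\right\Vert $ (the paper writes $a$ where you write $\lambda $), and then chain this estimate backward through the $k$ levels of (1.4). The divergence is in the final bookkeeping. The paper keeps the entire product coefficient, obtaining $\left\Vert x_{n+1}-q\right\Vert \leq \sigma \left\Vert x_{n}-q\right\Vert $ with $\sigma $ as in (2.9), and shows $\sigma <1$ by replacing each factor $\sum_{i\geq 1}(\cdot )\,a^{i}$ by $\sum_{i\geq 1}(\cdot )=1-(\text{zeroth weight})$, so that the resulting bound telescopes to exactly $1$ and hence $\sigma <1$; it then applies Lemma 1 with $\varepsilon _{n}\equiv 0$ and imposes no condition on $\alpha _{n,0}$ beyond $\alpha _{n,0}>0$. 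You instead discard the contraction gained at the inner levels, bounding every $\left\Vert y_{n}^{p}-q\right\Vert $ merely by $\left\Vert x_{n}-q\right\Vert $, keep only the top-level factor $1-(1-\lambda )(1-\alpha _{n,0})$, and are then forced to import the hypothesis $\sum_{n}(1-\alpha _{n,0})=\infty $, which appears nowhere in the theorem; as written, you therefore prove a statement with an added assumption rather than the one asserted. That said, your instinct points at a real soft spot: the paper's $\sigma $ depends on $n$ while Lemma 1 is stated for a single fixed $\sigma \in \lbrack 0,1)$, and if $\alpha _{n,0}\rightarrow 1$ so fast that $\sum_{n}(1-\alpha _{n,0})<\infty $, then $\sigma _{n}\rightarrow 1$, the product $\prod_{n}\sigma _{n}$ need not vanish, and the conclusion can genuinely fail; some uniformity or divergence condition is being used tacitly. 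So if your aim is to reproduce the paper's proof, retain the full product (2.9) and argue $\sigma <1$ termwise; if your aim is an airtight argument, your coarser route is fine, but the divergence condition must be promoted to an explicit hypothesis of the theorem.
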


\begin{proof}
The uniqueness of $q$ follows from (1.13). We shall now prove that $%
x_{n}\rightarrow q$.

Using (1.4) and Lemma 2, we get%
\begin{eqnarray}
\left\Vert x_{n+1}-q\right\Vert &=&\left\Vert \alpha _{n,0}x_{n}-\alpha
_{n,0}q+\sum\limits_{i_{1}=1}^{s_{1}}\alpha
_{n,i_{1}}T^{i_{1}}y_{n}^{1}-\sum\limits_{i_{1}=1}^{s_{1}}\alpha
_{n,i_{1}}q\right\Vert  \notag \\
&\leq &\alpha _{n,0}\left\Vert x_{n}-q\right\Vert
+\sum\limits_{i_{1}=1}^{s_{1}}\alpha _{n,i_{1}}\left\Vert
T^{i_{1}}y_{n}^{1}-T^{i_{1}}q\right\Vert  \notag \\
&\leq &\alpha _{n,0}\left\Vert x_{n}-q\right\Vert  \notag \\
&&+\sum\limits_{i_{1}=1}^{s_{1}}\alpha _{n,i_{1}}\left\{
\sum\limits_{j=1}^{i_{1}}\binom{i_{1}}{j}a^{i_{1}-j}\varphi ^{j}\left(
\left\Vert q-Tq\right\Vert \right) +a^{i_{1}}\left\Vert
y_{n}^{1}-q\right\Vert \right\}  \notag \\
&=&\alpha _{n,0}\left\Vert x_{n}-q\right\Vert +\left(
\sum\limits_{i_{1}=1}^{s_{1}}\alpha _{n,i_{1}}a^{i_{1}}\right) \left\Vert
y_{n}^{1}-q\right\Vert \text{,}  \label{eqn14}
\end{eqnarray}%
\begin{eqnarray}
\left\Vert y_{n}^{1}-q\right\Vert &=&\left\Vert \beta
_{n,0}^{1}x_{n}+\sum\limits_{i_{2}=1}^{s_{2}}\beta
_{n,i_{2}}^{1}T^{i_{2}}y_{n}^{2}-\sum\limits_{i_{2}=0}^{s_{2}}\beta
_{n,i_{2}}^{1}q\right\Vert  \notag \\
&\leq &\beta _{n,0}^{1}\left\Vert x_{n}-q\right\Vert
+\sum\limits_{i_{2}=1}^{s_{2}}\beta _{n,i_{2}}^{1}\left\Vert
T^{i_{2}}y_{n}^{2}-T^{i_{2}}q\right\Vert  \notag \\
&\leq &\beta _{n,0}^{1}\left\Vert x_{n}-q\right\Vert  \notag \\
&&+\sum\limits_{i_{2}=1}^{s_{2}}\beta _{n,i_{2}}^{1}\left\{
\sum\limits_{j=1}^{i_{2}}\binom{i_{2}}{j}a^{i_{2}-j}\varphi ^{j}\left(
\left\Vert q-Tq\right\Vert \right) +a^{i_{2}}\left\Vert
y_{n}^{2}-q\right\Vert \right\}  \notag \\
&=&\beta _{n,0}^{1}\left\Vert x_{n}-q\right\Vert +\left(
\sum\limits_{i_{2}=1}^{s_{2}}\beta _{n,i_{2}}^{1}a^{i_{2}}\right) \left\Vert
y_{n}^{2}-q\right\Vert \text{,}  \label{eqn15}
\end{eqnarray}%
\begin{eqnarray}
\left\Vert y_{n}^{2}-q\right\Vert &=&\left\Vert \beta
_{n,0}^{2}x_{n}+\sum\limits_{i_{3}=1}^{s_{3}}\beta
_{n,i_{3}}^{2}T^{i_{3}}y_{n}^{3}-q\right\Vert  \notag \\
&\leq &\beta _{n,0}^{2}\left\Vert x_{n}-q\right\Vert
+\sum\limits_{i_{3}=1}^{s_{3}}\beta _{n,i_{3}}^{2}\left\Vert
T^{i_{3}}y_{n}^{3}-T^{i_{3}}q\right\Vert  \notag \\
&\leq &\beta _{n,0}^{2}\left\Vert x_{n}-q\right\Vert  \notag \\
&&+\sum\limits_{i_{3}=1}^{s_{3}}\beta _{n,i_{3}}^{2}\left\{
\sum\limits_{j=1}^{i_{3}}\binom{i_{3}}{j}a^{i_{3}-j}\varphi ^{j}\left(
\left\Vert q-Tq\right\Vert \right) +a^{i_{3}}\left\Vert
y_{n}^{3}-q\right\Vert \right\}  \notag \\
&=&\beta _{n,0}^{2}\left\Vert x_{n}-q\right\Vert +\left(
\sum\limits_{i_{3}=1}^{s_{3}}\beta _{n,i_{3}}^{2}a^{i_{3}}\right) \left\Vert
y_{n}^{3}-q\right\Vert \text{,}  \label{eqn16}
\end{eqnarray}%
\begin{eqnarray}
\left\Vert y_{n}^{3}-q\right\Vert &=&\left\Vert \beta
_{n,0}^{3}x_{n}+\sum\limits_{i_{4}=1}^{s_{4}}\beta
_{n,i_{4}}^{3}T^{i_{4}}y_{n}^{4}-q\right\Vert  \notag \\
&\leq &\beta _{n,0}^{3}\left\Vert x_{n}-q\right\Vert
+\sum\limits_{i_{4}=1}^{s_{4}}\beta _{n,i_{4}}^{3}\left\Vert
T^{i_{4}}y_{n}^{4}-T^{i_{4}}q\right\Vert  \notag \\
&\leq &\beta _{n,0}^{3}\left\Vert x_{n}-q\right\Vert  \notag \\
&&+\sum\limits_{i_{4}=1}^{s_{4}}\beta _{n,i_{4}}^{3}\left\{
\sum\limits_{j=1}^{i_{4}}\binom{i_{4}}{j}a^{i_{4}-j}\varphi ^{j}\left(
\left\Vert q-Tq\right\Vert \right) +a^{i_{4}}\left\Vert
y_{n}^{4}-q\right\Vert \right\}  \notag \\
&=&\beta _{n,0}^{3}\left\Vert x_{n}-q\right\Vert +\left(
\sum\limits_{i_{4}=1}^{s_{4}}\beta _{n,i_{4}}^{3}a^{i_{4}}\right) \left\Vert
y_{n}^{4}-q\right\Vert \text{.}  \label{eqn17}
\end{eqnarray}%
By combining (2.1), (2.2), (2.3), and (2.4) we obtain%
\begin{eqnarray}
\left\Vert x_{n+1}-q\right\Vert &\leq &\left\{ \alpha _{n,0}+\left(
\sum\limits_{i_{1}=1}^{s_{1}}\alpha _{n,i_{1}}a^{i_{1}}\right) \beta
_{n,0}^{1}+\left( \sum\limits_{i_{1}=1}^{s_{1}}\alpha
_{n,i_{1}}a^{i_{1}}\right) \left( \sum\limits_{i_{2}=1}^{s_{2}}\beta
_{n,i_{2}}^{1}a^{i_{2}}\right) \beta _{n,0}^{2}\right.  \notag \\
&&\left. +\left( \sum\limits_{i_{1}=1}^{s_{1}}\alpha
_{n,i_{1}}a^{i_{1}}\right) \left( \sum\limits_{i_{2}=1}^{s_{2}}\beta
_{n,i_{2}}^{1}a^{i_{2}}\right) \left( \sum\limits_{i_{3}=1}^{s_{3}}\beta
_{n,i_{3}}^{2}a^{i_{3}}\right) \beta _{n,0}^{3}\right\} \left\Vert
x_{n}-q\right\Vert  \notag \\
&&+\left( \sum\limits_{i_{1}=1}^{s_{1}}\alpha _{n,i_{1}}a^{i_{1}}\right)
\left( \sum\limits_{i_{2}=1}^{s_{2}}\beta _{n,i_{2}}^{1}a^{i_{2}}\right)
\left( \sum\limits_{i_{3}=1}^{s_{3}}\beta _{n,i_{3}}^{2}a^{i_{3}}\right)
\left( \sum\limits_{i_{4}=1}^{s_{4}}\beta _{n,i_{4}}^{3}a^{i_{4}}\right)
\left\Vert y_{n}^{4}-q\right\Vert \text{.}  \label{eqn18}
\end{eqnarray}%
Continuing the above process we have%
\begin{eqnarray}
\left\Vert x_{n+1}-q\right\Vert &\leq &\left\{ \alpha _{n,0}+\left(
\sum\limits_{i_{1}=1}^{s_{1}}\alpha _{n,i_{1}}a^{i_{1}}\right) \beta
_{n,0}^{1}+\left( \sum\limits_{i_{1}=1}^{s_{1}}\alpha
_{n,i_{1}}a^{i_{1}}\right) \left( \sum\limits_{i_{2}=1}^{s_{2}}\beta
_{n,i_{2}}^{1}a^{i_{2}}\right) \beta _{n,0}^{2}\right.  \notag \\
&&+\cdots  \notag \\
&&+\left. \left( \sum\limits_{i_{1}=1}^{s_{1}}\alpha
_{n,i_{1}}a^{i_{1}}\right) \left( \sum\limits_{i_{2}=1}^{s_{2}}\beta
_{n,i_{2}}^{1}a^{i_{2}}\right) \cdots \left(
\sum\limits_{i_{k-2}=1}^{s_{k-2}}\beta _{n,i_{k-2}}^{k-3}a^{i_{k-2}}\right)
\beta _{n,0}^{k-2}\right\} \left\Vert x_{n}-q\right\Vert  \notag \\
&&+\left( \sum\limits_{i_{1}=1}^{s_{1}}\alpha _{n,i_{1}}a^{i_{1}}\right)
\left( \sum\limits_{i_{2}=1}^{s_{2}}\beta _{n,i_{2}}^{1}a^{i_{2}}\right)
\cdots \left( \sum\limits_{i_{k-1}=1}^{s_{k-1}}\beta
_{n,i_{k-1}}^{k-2}a^{i_{k-1}}\right) \left\Vert y_{n}^{k-1}-q\right\Vert 
\text{.}  \label{eqn19}
\end{eqnarray}%
Using again (1.4) and Lemma 2, we get 
\begin{eqnarray}
\left\Vert y_{n}^{k-1}-q\right\Vert &=&\left\Vert \beta _{n,0}^{k-1}\left(
x_{n}-q\right) +\sum\limits_{i_{k}=1}^{s_{k}}\beta _{n,i_{k}}^{k-1}\left(
T^{i_{k}}x_{n}-T^{i_{k}}q\right) \right\Vert  \notag \\
&\leq &\beta _{n,0}^{k-1}\left\Vert x_{n}-q\right\Vert
+\sum\limits_{i_{k}=1}^{s_{k}}\beta _{n,i_{k}}^{k-1}\left\Vert
T^{i_{k}}x_{n}-T^{i_{k}}q\right\Vert  \notag \\
&\leq &\beta _{n,0}^{k-1}\left\Vert x_{n}-q\right\Vert  \notag \\
&&+\sum\limits_{i_{k}=1}^{s_{k}}\beta _{n,i_{k}}^{k-1}\left\{
\sum\limits_{j=1}^{i_{k}}\binom{i_{k}}{j}a^{i_{k}-j}\varphi ^{j}\left(
\left\Vert q-Tq\right\Vert \right) +a^{i_{k}}\left\Vert x_{n}-q\right\Vert
\right\}  \notag \\
&=&\beta _{n,0}^{k-1}\left\Vert x_{n}-q\right\Vert +\left(
\sum\limits_{i_{k}=1}^{s_{k}}\beta _{n,i_{k}}^{k-1}a^{i_{k}}\right)
\left\Vert x_{n}-q\right\Vert  \notag \\
&=&\left( \sum\limits_{i_{k}=0}^{s_{k}}\beta
_{n,i_{k}}^{k-1}a^{i_{k}}\right) \left\Vert x_{n}-q\right\Vert \text{.}
\label{eqn20}
\end{eqnarray}%
Substituting (2.7) into (2.6) we derive%
\begin{eqnarray}
\left\Vert x_{n+1}-q\right\Vert &\leq &\left\{ \alpha _{n,0}+\left(
\sum\limits_{i_{1}=1}^{s_{1}}\alpha _{n,i_{1}}a^{i_{1}}\right) \beta
_{n,0}^{1}+\left( \sum\limits_{i_{1}=1}^{s_{1}}\alpha
_{n,i_{1}}a^{i_{1}}\right) \left( \sum\limits_{i_{2}=1}^{s_{2}}\beta
_{n,i_{2}}^{1}a^{i_{2}}\right) \beta _{n,0}^{2}\right.  \notag \\
&&+\cdots +\left( \sum\limits_{i_{1}=1}^{s_{1}}\alpha
_{n,i_{1}}a^{i_{1}}\right) \left( \sum\limits_{i_{2}=1}^{s_{2}}\beta
_{n,i_{2}}^{1}a^{i_{2}}\right) \cdots \left(
\sum\limits_{i_{k-2}=1}^{s_{k-2}}\beta _{n,i_{k-2}}^{k-3}a^{i_{k-2}}\right)
\beta _{n,0}^{k-2}  \notag \\
&&\left. +\left( \sum\limits_{i_{1}=1}^{s_{1}}\alpha
_{n,i_{1}}a^{i_{1}}\right) \left( \sum\limits_{i_{2}=1}^{s_{2}}\beta
_{n,i_{2}}^{1}a^{i_{2}}\right) \right.  \notag \\
&&\left. \cdots \left( \sum\limits_{i_{k-1}=1}^{s_{k-1}}\beta
_{n,i_{k-1}}^{k-2}a^{i_{k-1}}\right) \left(
\sum\limits_{i_{k}=0}^{s_{k}}\beta _{n,i_{k}}^{k-1}a^{i_{k}}\right) \right\}
\left\Vert x_{n}-q\right\Vert \text{.}  \label{eqn21}
\end{eqnarray}%
Define%
\begin{eqnarray}
\sigma &:&=\alpha _{n,0}+\left( \sum\limits_{i_{1}=1}^{s_{1}}\alpha
_{n,i_{1}}a^{i_{1}}\right) \beta _{n,0}^{1}+\left(
\sum\limits_{i_{1}=1}^{s_{1}}\alpha _{n,i_{1}}a^{i_{1}}\right) \left(
\sum\limits_{i_{2}=1}^{s_{2}}\beta _{n,i_{2}}^{1}a^{i_{2}}\right) \beta
_{n,0}^{2}  \notag \\
&&+\cdots +\left( \sum\limits_{i_{1}=1}^{s_{1}}\alpha
_{n,i_{1}}a^{i_{1}}\right) \left( \sum\limits_{i_{2}=1}^{s_{2}}\beta
_{n,i_{2}}^{1}a^{i_{2}}\right) \cdots \left(
\sum\limits_{i_{k-2}=1}^{s_{k-2}}\beta _{n,i_{k-2}}^{k-3}a^{i_{k-2}}\right)
\beta _{n,0}^{k-2}  \notag \\
&&+\left( \sum\limits_{i_{1}=1}^{s_{1}}\alpha _{n,i_{1}}a^{i_{1}}\right)
\left( \sum\limits_{i_{2}=1}^{s_{2}}\beta _{n,i_{2}}^{1}a^{i_{2}}\right)
\cdots \left( \sum\limits_{i_{k-1}=1}^{s_{k-1}}\beta
_{n,i_{k-1}}^{k-2}a^{i_{k-1}}\right) \left(
\sum\limits_{i_{k}=0}^{s_{k}}\beta _{n,i_{k}}^{k-1}a^{i_{k}}\right) \text{.}
\label{eqn22}
\end{eqnarray}%
Now we show that $\sigma \in \left[ 0,1\right) $. Since $a^{i_{k}}\in \left[
0,1\right) $, $\alpha _{n,0}>0$, $\sum\limits_{i_{1}=0}^{s_{1}}\alpha
_{n,i_{1}}=1$ and $\sum\limits_{i_{p+1}=0}^{s_{p+1}}\beta _{n,i_{p+1}}^{p}=1$
for $p=\overline{1,k-1}$, we obtain%
\begin{eqnarray}
\sigma &<&\alpha _{n,0}+\left( 1-\alpha _{n,0}\right) \beta
_{n,0}^{1}+\left( 1-\alpha _{n,0}\right) \left( 1-\beta _{n,0}^{1}\right)
\beta _{n,0}^{2}  \notag \\
&&+\cdots +\left( 1-\alpha _{n,0}\right) \left( 1-\beta _{n,0}^{1}\right)
\cdots \left( 1-\beta _{n,0}^{k-3}\right) \beta _{n,0}^{k-2}  \notag \\
&&+\left( 1-\alpha _{n,0}\right) \left( 1-\beta _{n,0}^{1}\right) \cdots
\left( 1-\beta _{n,0}^{k-3}\right) \left( 1-\beta _{n,0}^{k-2}\right)  \notag
\\
&=&1\text{.}  \label{eqn23}
\end{eqnarray}%
By an application of Lemma 1 to (2.8), $\lim_{n\rightarrow \infty }x_{n}=q$.
\end{proof}

\begin{theorem}
Let $x_{0}\in X$ and $\left\{ x_{n}\right\} _{n\in 
\mathbb{N}
}$ be a sequence generated by the Kirk-multistep iterative scheme (1.4).
Suppose that $T$ has a fixed point $q$. Then the Kirk multistep iterative
scheme (1.4) is $T$-stable.
\end{theorem}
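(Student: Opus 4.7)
The plan is to reduce the $T$-stability claim to the contractive estimate already obtained in the proof of Theorem 1, and then invoke Lemma 1. Let $\{y_n\}_{n\in\mathbb{N}}\subset X$ be arbitrary, and denote by $f(T,y_n)$ the one-step Kirk-multistep map applied to $y_n$, namely $f(T,y_n)=\alpha_{n,0}y_n+\sum_{i_1=1}^{s_1}\alpha_{n,i_1}T^{i_1}z_n^1$, where the auxiliary terms $z_n^p$ are defined exactly as the $y_n^p$ in (1.4), but with $y_n$ in place of $x_n$. By definition $\varepsilon_n=\|y_{n+1}-f(T,y_n)\|$, so the triangle inequality yields
\[
\|y_{n+1}-q\|\leq \varepsilon_n+\|f(T,y_n)-q\|,
\]
and the whole problem reduces to bounding $\|f(T,y_n)-q\|$ by a fixed contraction factor times $\|y_n-q\|$.

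Next, I would estimate $\|f(T,y_n)-q\|$ by repeating verbatim the chain of inequalities (2.1)--(2.8) from the proof of Theorem 1, with $y_n$ substituted for $x_n$ and $z_n^p$ substituted for $y_n^p$. Because those steps rely only on Lemma 2 and on the vanishing of $\varphi^j(\|q-Tq\|)$, and are indifferent to the starting point of the recursion, they produce the bound $\|f(T,y_n)-q\|\leq \sigma\|y_n-q\|$ with exactly the same $\sigma$ defined in (2.9); moreover $\sigma\in[0,1)$ by (2.10). Combined with the displayed inequality above, this gives
\[
\|y_{n+1}-q\|\leq \sigma\|y_n-q\|+\varepsilon_n.
\]

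Finally, assuming $\varepsilon_n\to 0$, Lemma 1 applied with $u_n=\|y_n-q\|$ delivers $\|y_n-q\|\to 0$, which is precisely the $T$-stability of (1.4) in the sense of Definition 1. The main, essentially bookkeeping, obstacle is checking that the nested telescoping of the estimates for $\|z_n^p-q\|$ collapses to the same $\sigma$ that was computed in (2.9); this is immediate once one observes that the computation is linear in the relevant norm differences and that all fixed-point error contributions $\varphi^j(\|q-Tq\|)$ vanish identically, so no new ingredient beyond Theorem 1's argument is needed.
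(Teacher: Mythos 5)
Your proposal is correct and follows essentially the same route as the paper: the same triangle-inequality peeling of $\varepsilon_n$, the same chain of Lemma~2 estimates on the auxiliary sequences (the paper's $u_n^p$, your $z_n^p$) collapsing to the same $\sigma$ of (2.9), and the same application of Lemma~1. The only difference is that the paper's proof additionally establishes the converse implication ($\lim_{n\to\infty}y_n=q\Rightarrow\lim_{n\to\infty}\varepsilon_n=0$) via the symmetric estimate $\varepsilon_n\leq\left\Vert y_{n+1}-q\right\Vert+\sigma\left\Vert y_n-q\right\Vert$, which Definition~1 does not actually require for $T$-stability, so your omission of it is not a gap in proving the stated theorem.
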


\begin{proof}
Let $\left\{ y_{n}\right\} _{n\in 
\mathbb{N}
}\subset X$, $\left\{ u_{n}^{p}\right\} _{n\in 
\mathbb{N}
}$, for $p=\overline{1,k-1}$ be arbitrary sequences in $X$. Let $\varepsilon
_{n}=\left\Vert y_{n+1}-\alpha
_{n,0}y_{n}-\sum\limits_{i_{1}=1}^{s_{1}}\alpha
_{n,i_{1}}T^{i_{1}}u_{n}^{1}\right\Vert $, $n=0,1,2,\ldots ,$ where $%
u_{n}^{p}=\beta _{n,0}^{p}y_{n}+\sum\limits_{i_{p+1}=1}^{s_{p+1}}\beta
_{n,i_{p+1}}^{p}T^{i_{p+1}}u_{n}^{p+1}$,\ $p=\overline{1,k-2}$, $%
u_{n}^{k-1}=\sum\limits_{i_{k}=0}^{s_{k}}\beta
_{n,i_{k}}^{k-1}T^{i_{k}}y_{n} $, $k\geq 2$ and let $\lim_{n\rightarrow
\infty }\varepsilon _{n}=0$. We shall prove that $\lim_{n\rightarrow \infty
}y_{n}=q$.

It follows from (1.4) and Lemma 2 that%
\begin{eqnarray}
\left\Vert y_{n+1}-q\right\Vert &=&\left\Vert y_{n+1}-\alpha
_{n,0}y_{n}-\sum\limits_{i_{1}=1}^{s_{1}}\alpha
_{n,i_{1}}T^{i_{1}}u_{n}^{1}+\alpha
_{n,0}y_{n}+\sum\limits_{i_{1}=1}^{s_{1}}\alpha
_{n,i_{1}}T^{i_{1}}u_{n}^{1}-q\right\Vert  \notag \\
&\leq &\varepsilon _{n}+\left\Vert \alpha _{n,0}\left( y_{n}-q\right)
+\sum\limits_{i_{1}=1}^{s_{1}}\alpha _{n,i_{1}}\left(
T^{i_{1}}u_{n}^{1}-T^{i_{1}}q\right) \right\Vert  \notag \\
&\leq &\alpha _{n,0}\left\Vert y_{n}-q\right\Vert +\varepsilon
_{n}+\sum\limits_{i_{1}=1}^{s_{1}}\alpha _{n,i_{1}}\left\Vert
T^{i_{1}}u_{n}^{1}-T^{i_{1}}q\right\Vert  \notag \\
&\leq &\alpha _{n,0}\left\Vert y_{n}-q\right\Vert +\varepsilon _{n}  \notag
\\
&&+\sum\limits_{i_{1}=1}^{s_{1}}\alpha _{n,i_{1}}\left\{
\sum\limits_{j=1}^{i_{1}}\binom{i_{1}}{j}a^{i_{1}-j}\varphi ^{j}\left(
\left\Vert q-Tq\right\Vert \right) +a^{i_{1}}\left\Vert
u_{n}^{1}-q\right\Vert \right\}  \notag \\
&=&\alpha _{n,0}\left\Vert y_{n}-q\right\Vert +\varepsilon _{n}+\left(
\sum\limits_{i_{1}=1}^{s_{1}}\alpha _{n,i_{1}}a^{i_{1}}\right) \left\Vert
u_{n}^{1}-q\right\Vert \text{,}  \label{eqn24}
\end{eqnarray}%
\begin{eqnarray}
\left\Vert u_{n}^{1}-q\right\Vert &=&\left\Vert \beta _{n,0}^{1}\left(
y_{n}-q\right) +\sum\limits_{i_{2}=1}^{s_{2}}\beta _{n,i_{2}}^{1}\left(
T^{i_{2}}u_{n}^{2}-T^{i_{2}}q\right) \right\Vert  \notag \\
&\leq &\beta _{n,0}^{1}\left\Vert y_{n}-q\right\Vert
+\sum\limits_{i_{2}=1}^{s_{2}}\beta _{n,i_{2}}^{1}\left\Vert
T^{i_{2}}u_{n}^{2}-T^{i_{2}}q\right\Vert  \notag \\
&\leq &\beta _{n,0}^{1}\left\Vert y_{n}-q\right\Vert  \notag \\
&&+\sum\limits_{i_{2}=1}^{s_{2}}\beta _{n,i_{2}}^{1}\left\{
\sum\limits_{j=1}^{i_{2}}\binom{i_{2}}{j}a^{i_{2}-j}\varphi ^{j}\left(
\left\Vert q-Tq\right\Vert \right) +a^{i_{2}}\left\Vert
u_{n}^{2}-q\right\Vert \right\}  \notag \\
&=&\beta _{n,0}^{1}\left\Vert y_{n}-q\right\Vert +\left(
\sum\limits_{i_{2}=1}^{s_{2}}\beta _{n,i_{2}}^{1}a^{i_{2}}\right) \left\Vert
u_{n}^{2}-q\right\Vert \text{,}  \label{eqn25}
\end{eqnarray}%
\begin{eqnarray}
\left\Vert u_{n}^{2}-q\right\Vert &=&\left\Vert \beta _{n,0}^{2}\left(
y_{n}-q\right) +\sum\limits_{i_{3}=1}^{s_{3}}\beta _{n,i_{3}}^{2}\left(
T^{i_{3}}u_{n}^{3}-T^{i_{3}}q\right) \right\Vert  \notag \\
&\leq &\beta _{n,0}^{2}\left\Vert y_{n}-q\right\Vert
+\sum\limits_{i_{3}=1}^{s_{3}}\beta _{n,i_{3}}^{2}\left\Vert
T^{i_{3}}u_{n}^{3}-T^{i_{3}}q\right\Vert  \notag \\
&\leq &\beta _{n,0}^{2}\left\Vert y_{n}-q\right\Vert  \notag \\
&&+\sum\limits_{i_{3}=1}^{s_{3}}\beta _{n,i_{3}}^{2}\left\{
\sum\limits_{j=1}^{i_{3}}\binom{i_{3}}{j}a^{i_{3}-j}\varphi ^{j}\left(
\left\Vert q-Tq\right\Vert \right) +a^{i_{3}}\left\Vert
u_{n}^{3}-q\right\Vert \right\}  \notag \\
&=&\beta _{n,0}^{2}\left\Vert y_{n}-q\right\Vert +\left(
\sum\limits_{i_{3}=1}^{s_{3}}\beta _{n,i_{3}}^{2}a^{i_{3}}\right) \left\Vert
u_{n}^{3}-q\right\Vert \text{.}  \label{eqn26}
\end{eqnarray}%
Combining (2.11), (2.12), and (2.13) we have%
\begin{eqnarray}
\left\Vert y_{n+1}-q\right\Vert &\leq &\left(
\sum\limits_{i_{1}=1}^{s_{1}}\alpha _{n,i_{1}}a^{i_{1}}\right) \left(
\sum\limits_{i_{2}=1}^{s_{2}}\beta _{n,i_{2}}^{1}a^{i_{2}}\right) \left(
\sum\limits_{i_{3}=1}^{s_{3}}\beta _{n,i_{3}}^{2}a^{i_{3}}\right) \left\Vert
u_{n}^{3}-q\right\Vert  \notag \\
&&+\left( \sum\limits_{i_{1}=1}^{s_{1}}\alpha _{n,i_{1}}a^{i_{1}}\right)
\left( \sum\limits_{i_{2}=1}^{s_{2}}\beta _{n,i_{2}}^{1}a^{i_{2}}\right)
\beta _{n,0}^{2}\left\Vert y_{n}-q\right\Vert  \notag \\
&&+\left( \sum\limits_{i_{1}=1}^{s_{1}}\alpha _{n,i_{1}}a^{i_{1}}\right)
\beta _{n,0}^{1}\left\Vert y_{n}-q\right\Vert +\alpha _{n,0}\left\Vert
y_{n}-q\right\Vert +\varepsilon _{n}\text{.}  \label{eqn27}
\end{eqnarray}%
By induction we get%
\begin{eqnarray}
\left\Vert y_{n+1}-q\right\Vert &\leq &\left(
\sum\limits_{i_{1}=1}^{s_{1}}\alpha _{n,i_{1}}a^{i_{1}}\right) \left(
\sum\limits_{i_{2}=1}^{s_{2}}\beta _{n,i_{2}}^{1}a^{i_{2}}\right) \cdots
\left( \sum\limits_{i_{k-1}=1}^{s_{k-1}}\beta
_{n,i_{k-1}}^{k-2}a^{i_{k-1}}\right) \left\Vert u_{n}^{k-1}-q\right\Vert 
\notag \\
&&+\left\{ \alpha _{n,0}+\left( \sum\limits_{i_{1}=1}^{s_{1}}\alpha
_{n,i_{1}}a^{i_{1}}\right) \beta _{n,0}^{1}+\left(
\sum\limits_{i_{1}=1}^{s_{1}}\alpha _{n,i_{1}}a^{i_{1}}\right) \left(
\sum\limits_{i_{2}=1}^{s_{2}}\beta _{n,i_{2}}^{1}a^{i_{2}}\right) \beta
_{n,0}^{2}\right.  \notag \\
&&+\cdots  \notag \\
&&+\left( \sum\limits_{i_{1}=1}^{s_{1}}\alpha _{n,i_{1}}a^{i_{1}}\right)
\left( \sum\limits_{i_{2}=1}^{s_{2}}\beta _{n,i_{2}}^{1}a^{i_{2}}\right) 
\notag \\
&&\left. \cdots \left( \sum\limits_{i_{k-2}=1}^{s_{k-2}}\beta
_{n,i_{k-2}}^{k-3}a^{i_{k-2}}\right) \beta _{n,0}^{k-2}\right\} \left\Vert
y_{n}-q\right\Vert +\varepsilon _{n}\text{.}  \label{eqn28}
\end{eqnarray}%
Again using (1.4) and Lemma 2, we obtain%
\begin{eqnarray}
\left\Vert u_{n}^{k-1}-q\right\Vert &=&\left\Vert
\sum\limits_{i_{k}=0}^{s_{k}}\beta
_{n,i_{k}}^{k-1}T^{i_{k}}y_{n}-\sum\limits_{i_{k}=0}^{s_{k}}\beta
_{n,i_{k}}^{k-1}T^{i_{k}}q\right\Vert  \notag \\
&\leq &\beta _{n,0}^{k-1}\left\Vert y_{n}-q\right\Vert
+\sum\limits_{i_{k}=1}^{s_{k}}\beta _{n,i_{k}}^{k-1}\left\Vert
T^{i_{k}}y_{n}-T^{i_{k}}q\right\Vert  \notag \\
&\leq &\beta _{n,0}^{k-1}\left\Vert y_{n}-q\right\Vert  \notag \\
&&+\sum\limits_{i_{k}=1}^{s_{k}}\beta _{n,i_{k}}^{k-1}\left\{
\sum\limits_{j=1}^{i_{k}}\binom{i_{k}}{j}a^{i_{k}-j}\varphi ^{j}\left(
\left\Vert q-Tq\right\Vert \right) +a^{i_{k}}\left\Vert y_{n}-q\right\Vert
\right\}  \notag \\
&=&\left( \sum\limits_{i_{k}=0}^{s_{k}}\beta
_{n,i_{k}}^{k-1}a^{i_{k}}\right) \left\Vert y_{n}-q\right\Vert \text{.}
\label{eqn29}
\end{eqnarray}%
Substituting (2.16) in (2.15) we derive%
\begin{eqnarray}
\left\Vert y_{n+1}-q\right\Vert &\leq &\left\{ \alpha _{n,0}+\left(
\sum\limits_{i_{1}=1}^{s_{1}}\alpha _{n,i_{1}}a^{i_{1}}\right) \beta
_{n,0}^{1}+\left( \sum\limits_{i_{1}=1}^{s_{1}}\alpha
_{n,i_{1}}a^{i_{1}}\right) \left( \sum\limits_{i_{2}=1}^{s_{2}}\beta
_{n,i_{2}}^{1}a^{i_{2}}\right) \beta _{n,0}^{2}\right.  \notag \\
&&+\cdots  \notag \\
&&+\left( \sum\limits_{i_{1}=1}^{s_{1}}\alpha _{n,i_{1}}a^{i_{1}}\right)
\left( \sum\limits_{i_{2}=1}^{s_{2}}\beta _{n,i_{2}}^{1}a^{i_{2}}\right)
\cdots \left( \sum\limits_{i_{k-2}=1}^{s_{k-2}}\beta
_{n,i_{k-2}}^{k-3}a^{i_{k-2}}\right) \beta _{n,0}^{k-2}  \notag \\
&&+\left( \sum\limits_{i_{1}=1}^{s_{1}}\alpha _{n,i_{1}}a^{i_{1}}\right)
\left( \sum\limits_{i_{2}=1}^{s_{2}}\beta _{n,i_{2}}^{1}a^{i_{2}}\right) 
\notag \\
&&\left. \cdots \left( \sum\limits_{i_{k-1}=1}^{s_{k-1}}\beta
_{n,i_{k-1}}^{k-2}a^{i_{k-1}}\right) \left(
\sum\limits_{i_{k}=0}^{s_{k}}\beta _{n,i_{k}}^{k-1}a^{i_{k}}\right) \right\}
\left\Vert y_{n}-q\right\Vert +\varepsilon _{n}\text{.}  \label{eqn30}
\end{eqnarray}%
Define%
\begin{eqnarray}
\sigma &:&=\alpha _{n,0}+\left( \sum\limits_{i_{1}=1}^{s_{1}}\alpha
_{n,i_{1}}a^{i_{1}}\right) \beta _{n,0}^{1}+\left(
\sum\limits_{i_{1}=1}^{s_{1}}\alpha _{n,i_{1}}a^{i_{1}}\right) \left(
\sum\limits_{i_{2}=1}^{s_{2}}\beta _{n,i_{2}}^{1}a^{i_{2}}\right) \beta
_{n,0}^{2}  \notag \\
&&+\cdots  \notag \\
&&+\left( \sum\limits_{i_{1}=1}^{s_{1}}\alpha _{n,i_{1}}a^{i_{1}}\right)
\left( \sum\limits_{i_{2}=1}^{s_{2}}\beta _{n,i_{2}}^{1}a^{i_{2}}\right)
\cdots \left( \sum\limits_{i_{k-2}=1}^{s_{k-2}}\beta
_{n,i_{k-2}}^{k-3}a^{i_{k-2}}\right) \beta _{n,0}^{k-2}  \notag \\
&&+\left( \sum\limits_{i_{1}=1}^{s_{1}}\alpha _{n,i_{1}}a^{i_{1}}\right)
\left( \sum\limits_{i_{2}=1}^{s_{2}}\beta _{n,i_{2}}^{1}a^{i_{2}}\right)
\cdots \left( \sum\limits_{i_{k-1}=1}^{s_{k-1}}\beta
_{n,i_{k-1}}^{k-2}a^{i_{k-1}}\right) \left(
\sum\limits_{i_{k}=0}^{s_{k}}\beta _{n,i_{k}}^{k-1}a^{i_{k}}\right) \text{.}
\label{eqn31}
\end{eqnarray}%
We now show that $\sigma \in \left( 0,1\right) $. Since $a^{i_{k}}\in \left[
0,1\right) $, $\alpha _{n,0}>0$, $\sum\limits_{i_{1}=0}^{s_{1}}\alpha
_{n,i_{1}}=1$ and $\sum\limits_{i_{p+1}=0}^{s_{p+1}}\beta _{n,i_{p+1}}^{p}=1$
for $p=\overline{1,k-1}$, we have%
\begin{eqnarray}
\sigma &<&\alpha _{n,0}+\left( 1-\alpha _{n,0}\right) \beta
_{n,0}^{1}+\left( 1-\alpha _{n,0}\right) \left( 1-\beta _{n,0}^{1}\right)
\beta _{n,0}^{2}  \notag \\
&&+\cdots +\left( 1-\alpha _{n,0}\right) \left( 1-\beta _{n,0}^{1}\right)
\cdots \left( 1-\beta _{n,0}^{k-3}\right) \beta _{n,0}^{k-2}  \notag \\
&&+\left( 1-\alpha _{n,0}\right) \left( 1-\beta _{n,0}^{1}\right) \cdots
\left( 1-\beta _{n,0}^{k-2}\right)  \notag \\
&=&1\text{,}  \label{eqn32}
\end{eqnarray}%
that is, $\sigma \in \left( 0,1\right) $. Therefore, an application of Lemma
2 to (2.17) yields $\lim_{n\rightarrow \infty }y_{n}=q$.

Now suppose that $\lim_{n\rightarrow \infty }y_{n}=q$. Then we shall show
that $\lim_{n\rightarrow \infty }\varepsilon _{n}=0$.

Using Lemma 2.2 we have%
\begin{eqnarray}
\varepsilon _{n} &=&\left\Vert y_{n+1}-\alpha
_{n,0}y_{n}-\sum\limits_{i_{1}=1}^{s_{1}}\alpha
_{n,i_{1}}T^{i_{1}}u_{n}^{1}\right\Vert  \notag \\
&\leq &\left\Vert y_{n+1}-q\right\Vert +\left\Vert q-\alpha
_{n,0}y_{n}-\sum\limits_{i_{1}=1}^{s_{1}}\alpha
_{n,i_{1}}T^{i_{1}}u_{n}^{1}\right\Vert  \notag \\
&=&\left\Vert y_{n+1}-q\right\Vert +\left\Vert \alpha _{n,0}\left(
q-y_{n}\right) +\sum\limits_{i_{1}=1}^{s_{1}}\alpha _{n,i_{1}}\left(
T^{i_{1}}q-T^{i_{1}}u_{n}^{1}\right) \right\Vert  \notag \\
&\leq &\left\Vert y_{n+1}-q\right\Vert +\alpha _{n,0}\left\Vert
y_{n}-q\right\Vert +\sum\limits_{i_{1}=1}^{s_{1}}\alpha _{n,i_{1}}\left\Vert
T^{i_{1}}q-T^{i_{1}}u_{n}^{1}\right\Vert  \notag \\
&\leq &\left\Vert y_{n+1}-q\right\Vert +\alpha _{n,0}\left\Vert
y_{n}-q\right\Vert  \notag \\
&&+\sum\limits_{i_{1}=1}^{s_{1}}\alpha _{n,i_{1}}\left\{
\sum\limits_{j=1}^{i_{1}}\binom{i_{1}}{j}a^{i_{1}-j}\varphi ^{j}\left(
\left\Vert q-Tq\right\Vert \right) +a^{i_{1}}\left\Vert
q-u_{n}^{1}\right\Vert \right\}  \notag \\
&\leq &\left\Vert y_{n+1}-q\right\Vert +\alpha _{n,0}\left\Vert
y_{n}-q\right\Vert +\left( \sum\limits_{i_{1}=1}^{s_{1}}\alpha
_{n,i_{1}}a^{i_{1}}\right) \left\Vert q-u_{n}^{1}\right\Vert \text{,}
\label{eqn33}
\end{eqnarray}%
\begin{eqnarray}
\left\Vert q-u_{n}^{1}\right\Vert &=&\left\Vert q-\beta
_{n,0}^{1}y_{n}-\sum\limits_{i_{2}=1}^{s_{2}}\beta
_{n,i_{2}}^{1}T^{i_{2}}u_{n}^{2}\right\Vert  \notag \\
&=&\left\Vert \beta _{n,0}^{1}\left( q-y_{n}\right)
+\sum\limits_{i_{2}=1}^{s_{2}}\beta _{n,i_{2}}^{1}\left(
T^{i_{2}}q-T^{i_{2}}u_{n}^{2}\right) \right\Vert  \notag \\
&\leq &\beta _{n,0}^{1}\left\Vert y_{n}-q\right\Vert
+\sum\limits_{i_{2}=1}^{s_{2}}\beta _{n,i_{2}}^{1}\left\Vert
T^{i_{2}}q-T^{i_{2}}u_{n}^{2}\right\Vert  \notag \\
&\leq &\beta _{n,0}^{1}\left\Vert y_{n}-q\right\Vert  \notag \\
&&+\sum\limits_{i_{2}=1}^{s_{2}}\beta _{n,i_{2}}^{1}\left\{
\sum\limits_{j=1}^{i_{2}}\binom{i_{2}}{j}a^{i_{2}-j}\varphi ^{j}\left(
\left\Vert q-Tq\right\Vert \right) +a^{i_{2}}\left\Vert
q-u_{n}^{2}\right\Vert \right\}  \notag \\
&\leq &\beta _{n,0}^{1}\left\Vert y_{n}-q\right\Vert +\left(
\sum\limits_{i_{2}=1}^{s_{2}}\beta _{n,i_{2}}^{1}a^{i_{2}}\right) \left\Vert
q-u_{n}^{2}\right\Vert \text{,}  \label{eqn34}
\end{eqnarray}%
\begin{eqnarray}
\left\Vert q-u_{n}^{2}\right\Vert &=&\left\Vert q-\beta
_{n,0}^{2}y_{n}-\sum\limits_{i_{3}=1}^{s_{3}}\beta
_{n,i_{3}}^{2}T^{i_{3}}u_{n}^{3}\right\Vert  \notag \\
&=&\left\Vert \beta _{n,0}^{2}\left( q-y_{n}\right)
+\sum\limits_{i_{3}=1}^{s_{3}}\beta _{n,i_{3}}^{2}\left(
T^{i_{3}}q-T^{i_{3}}u_{n}^{3}\right) \right\Vert  \notag \\
&\leq &\beta _{n,0}^{2}\left\Vert y_{n}-q\right\Vert
+\sum\limits_{i_{3}=1}^{s_{3}}\beta _{n,i_{3}}^{2}\left\Vert
T^{i_{3}}q-T^{i_{3}}u_{n}^{3}\right\Vert  \notag \\
&\leq &\beta _{n,0}^{2}\left\Vert y_{n}-q\right\Vert  \notag \\
&&+\sum\limits_{i_{3}=1}^{s_{3}}\beta _{n,i_{3}}^{2}\left\{
\sum\limits_{j=1}^{i_{3}}\binom{i_{3}}{j}a^{i_{3}-j}\varphi ^{j}\left(
\left\Vert q-Tq\right\Vert \right) +a^{i_{3}}\left\Vert
q-u_{n}^{3}\right\Vert \right\}  \notag \\
&\leq &\beta _{n,0}^{2}\left\Vert y_{n}-q\right\Vert +\left(
\sum\limits_{i_{3}=1}^{s_{3}}\beta _{n,i_{3}}^{2}a^{i_{3}}\right) \left\Vert
q-u_{n}^{3}\right\Vert \text{.}  \label{eqn35}
\end{eqnarray}%
It follows from the relation (2.20), (2.21), and (2.22) that%
\begin{eqnarray}
\varepsilon _{n} &\leq &\left\Vert y_{n+1}-q\right\Vert +\alpha
_{n,0}\left\Vert y_{n}-q\right\Vert  \notag \\
&&+\left( \sum\limits_{i_{1}=1}^{s_{1}}\alpha _{n,i_{1}}a^{i_{1}}\right)
\left( \sum\limits_{i_{2}=1}^{s_{2}}\beta _{n,i_{2}}^{1}a^{i_{2}}\right)
\left( \sum\limits_{i_{3}=1}^{s_{3}}\beta _{n,i_{3}}^{2}a^{i_{3}}\right)
\left\Vert q-u_{n}^{3}\right\Vert  \notag \\
&&+\left( \sum\limits_{i_{1}=1}^{s_{1}}\alpha _{n,i_{1}}a^{i_{1}}\right)
\left( \sum\limits_{i_{2}=1}^{s_{2}}\beta _{n,i_{2}}^{1}a^{i_{2}}\right)
\beta _{n,0}^{2}\left\Vert y_{n}-q\right\Vert  \notag \\
&&+\left( \sum\limits_{i_{1}=1}^{s_{1}}\alpha _{n,i_{1}}a^{i_{1}}\right)
\beta _{n,0}^{1}\left\Vert y_{n}-q\right\Vert \text{.}  \label{eqn36}
\end{eqnarray}%
Thus, by induction, we get%
\begin{eqnarray}
\varepsilon _{n} &\leq &\left\Vert y_{n+1}-q\right\Vert +\alpha
_{n,0}\left\Vert y_{n}-q\right\Vert  \notag \\
&&+\left( \sum\limits_{i_{1}=1}^{s_{1}}\alpha _{n,i_{1}}a^{i_{1}}\right)
\left( \sum\limits_{i_{2}=1}^{s_{2}}\beta _{n,i_{2}}^{1}a^{i_{2}}\right)
\cdots \left( \sum\limits_{i_{k-1}=1}^{s_{k-1}}\beta
_{n,i_{k-1}}^{k-2}a^{i_{k-1}}\right) \left\Vert q-u_{n}^{k-1}\right\Vert 
\notag \\
&&+\left( \sum\limits_{i_{1}=1}^{s_{1}}\alpha _{n,i_{1}}a^{i_{1}}\right)
\left( \sum\limits_{i_{2}=1}^{s_{2}}\beta _{n,i_{2}}^{1}a^{i_{2}}\right)
\cdots \left( \sum\limits_{i_{k-2}=1}^{s_{k-2}}\beta
_{n,i_{k-2}}^{k-3}a^{i_{k-2}}\right) \beta _{n,0}^{k-2}\left\Vert
y_{n}-q\right\Vert  \notag \\
&&\cdots  \notag \\
&&+\left( \sum\limits_{i_{1}=1}^{s_{1}}\alpha _{n,i_{1}}a^{i_{1}}\right)
\left( \sum\limits_{i_{2}=1}^{s_{2}}\beta _{n,i_{2}}^{1}a^{i_{2}}\right)
\beta _{n,0}^{2}\left\Vert y_{n}-q\right\Vert  \notag \\
&&+\left( \sum\limits_{i_{1}=1}^{s_{1}}\alpha _{n,i_{1}}a^{i_{1}}\right)
\beta _{n,0}^{1}\left\Vert y_{n}-q\right\Vert \text{.}  \label{eqn37}
\end{eqnarray}%
Utilizing (1.4) and Lemma 2, we obtain%
\begin{eqnarray}
\left\Vert q-u_{n}^{k-1}\right\Vert &=&\left\Vert
\sum\limits_{i_{k}=0}^{s_{k}}\beta
_{n,i_{k}}^{k-1}T^{i_{k}}q-\sum\limits_{i_{k}=0}^{s_{k}}\beta
_{n,i_{k}}^{k-1}T^{i_{k}}y_{n}\right\Vert  \notag \\
&\leq &\beta _{n,0}^{k-1}\left\Vert y_{n}-q\right\Vert
+\sum\limits_{i_{k}=1}^{s_{k}}\beta _{n,i_{k}}^{k-1}\left\Vert
T^{i_{k}}q-T^{i_{k}}y_{n}\right\Vert  \notag \\
&\leq &\beta _{n,0}^{k-1}\left\Vert y_{n}-q\right\Vert  \notag \\
&&+\sum\limits_{i_{k}=1}^{s_{k}}\beta _{n,i_{k}}^{k-1}\left\{
\sum\limits_{j=1}^{i_{k}}\binom{i_{k}}{j}a^{i_{k}-j}\varphi ^{j}\left(
\left\Vert q-Tq\right\Vert \right) +a^{i_{k}}\left\Vert y_{n}-q\right\Vert
\right\}  \notag \\
&=&\left( \sum\limits_{i_{k}=0}^{s_{k}}\beta
_{n,i_{k}}^{k-1}a^{i_{k}}\right) \left\Vert y_{n}-q\right\Vert \text{.}
\label{eqn38}
\end{eqnarray}%
Substituting (2.25) in (2.24) gives%
\begin{eqnarray}
\varepsilon _{n} &\leq &\left\Vert y_{n+1}-q\right\Vert  \notag \\
&&+\left\{ \alpha _{n,0}+\left( \sum\limits_{i_{1}=1}^{s_{1}}\alpha
_{n,i_{1}}a^{i_{1}}\right) \beta _{n,0}^{1}+\left(
\sum\limits_{i_{1}=1}^{s_{1}}\alpha _{n,i_{1}}a^{i_{1}}\right) \left(
\sum\limits_{i_{2}=1}^{s_{2}}\beta _{n,i_{2}}^{1}a^{i_{2}}\right) \beta
_{n,0}^{2}\right.  \notag \\
&&+\cdots  \notag \\
&&+\left( \sum\limits_{i_{1}=1}^{s_{1}}\alpha _{n,i_{1}}a^{i_{1}}\right)
\left( \sum\limits_{i_{2}=1}^{s_{2}}\beta _{n,i_{2}}^{1}a^{i_{2}}\right)
\cdots \left( \sum\limits_{i_{k-2}=1}^{s_{k-2}}\beta
_{n,i_{k-2}}^{k-3}a^{i_{k-2}}\right) \beta _{n,0}^{k-2}  \notag \\
&&+\left( \sum\limits_{i_{1}=1}^{s_{1}}\alpha _{n,i_{1}}a^{i_{1}}\right)
\left( \sum\limits_{i_{2}=1}^{s_{2}}\beta _{n,i_{2}}^{1}a^{i_{2}}\right) 
\notag \\
&&\cdots \left( \sum\limits_{i_{k-1}=1}^{s_{k-1}}\beta
_{n,i_{k-1}}^{k-2}a^{i_{k-1}}\right) \left(
\sum\limits_{i_{k}=0}^{s_{k}}\beta _{n,i_{k}}^{k-1}a^{i_{k}}\right)
\left\Vert y_{n}-q\right\Vert  \label{eqn39}
\end{eqnarray}%
Again define%
\begin{eqnarray}
\sigma &:&=\alpha _{n,0}+\left( \sum\limits_{i_{1}=1}^{s_{1}}\alpha
_{n,i_{1}}a^{i_{1}}\right) \beta _{n,0}^{1}+\left(
\sum\limits_{i_{1}=1}^{s_{1}}\alpha _{n,i_{1}}a^{i_{1}}\right) \left(
\sum\limits_{i_{2}=1}^{s_{2}}\beta _{n,i_{2}}^{1}a^{i_{2}}\right) \beta
_{n,0}^{2}  \notag \\
&&+\cdots  \notag \\
&&+\left( \sum\limits_{i_{1}=1}^{s_{1}}\alpha _{n,i_{1}}a^{i_{1}}\right)
\left( \sum\limits_{i_{2}=1}^{s_{2}}\beta _{n,i_{2}}^{1}a^{i_{2}}\right)
\cdots \left( \sum\limits_{i_{k-2}=1}^{s_{k-2}}\beta
_{n,i_{k-2}}^{k-3}a^{i_{k-2}}\right) \beta _{n,0}^{k-2}  \notag \\
&&+\left( \sum\limits_{i_{1}=1}^{s_{1}}\alpha _{n,i_{1}}a^{i_{1}}\right)
\left( \sum\limits_{i_{2}=1}^{s_{2}}\beta _{n,i_{2}}^{1}a^{i_{2}}\right)
\cdots \left( \sum\limits_{i_{k-1}=1}^{s_{k-1}}\beta
_{n,i_{k-1}}^{k-2}a^{i_{k-1}}\right) \left(
\sum\limits_{i_{k}=0}^{s_{k}}\beta _{n,i_{k}}^{k-1}a^{i_{k}}\right) \text{.}
\label{eqn40}
\end{eqnarray}%
Hence (2.26) becomes%
\begin{equation}
\varepsilon _{n}\leq \left\Vert y_{n+1}-q\right\Vert +\sigma \left\Vert
y_{n}-q\right\Vert \text{.}  \label{eqn41}
\end{equation}%
Using same argument that of first part of the proof we obtain $\sigma \in
\left( 0,1\right) $.

It therefore follows from assumption $\lim_{n\rightarrow \infty }y_{n}=q$
that $\varepsilon _{n}\rightarrow 0$ as $n\rightarrow \infty $.
\end{proof}

\begin{theorem}
Let $x_{0}\in X$ and $\left\{ x_{n}\right\} _{n\in 
\mathbb{N}
}$ be a sequence generated by the Kirk-SP iterative scheme (1.3). Suppose
that $T$ has a fixed point $q$. The, the Kirk-SP iterative scheme (1.3) is $%
T $-stable.
\end{theorem}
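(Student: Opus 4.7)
The plan is to adapt the proof of Theorem 2 to the three-level Kirk-SP scheme (1.3). The scheme (1.3) differs from the $k=3$ case of (1.4) in that each level's sum runs from $i=0$ without a separated $x_{n}$-term, so the recursive bound will collapse into a single product of three factors rather than a telescoping sum, making the bookkeeping lighter. Concretely, I would fix an arbitrary sequence $\left\{y_{n}\right\}_{n\in \mathbb{N}}\subset X$, introduce companion sequences
\begin{equation*}
u_{n}^{1}=\sum_{i_{2}=0}^{s_{2}}\beta _{n,i_{2}}^{1}T^{i_{2}}u_{n}^{2},\qquad u_{n}^{2}=\sum_{i_{3}=0}^{s_{3}}\beta _{n,i_{3}}^{2}T^{i_{3}}y_{n},
\end{equation*}
and set $\varepsilon _{n}=\left\Vert y_{n+1}-\sum_{i_{1}=0}^{s_{1}}\alpha _{n,i_{1}}T^{i_{1}}u_{n}^{1}\right\Vert $.

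For the forward implication, I would suppose $\lim_{n\rightarrow \infty }\varepsilon _{n}=0$ and begin from
\begin{equation*}
\left\Vert y_{n+1}-q\right\Vert \leq \varepsilon _{n}+\sum_{i_{1}=0}^{s_{1}}\alpha _{n,i_{1}}\left\Vert T^{i_{1}}u_{n}^{1}-T^{i_{1}}q\right\Vert ,
\end{equation*}
which follows from the triangle inequality together with $T^{i_{1}}q=q$ and $\sum_{i_{1}=0}^{s_{1}}\alpha _{n,i_{1}}=1$. Applying Lemma 2 term by term and using $\varphi (\left\Vert q-Tq\right\Vert )=\varphi (0)=0$ kills every $\varphi $-contribution and leaves $\left\Vert y_{n+1}-q\right\Vert \leq \varepsilon _{n}+\left( \sum_{i_{1}=0}^{s_{1}}\alpha _{n,i_{1}}a^{i_{1}}\right) \left\Vert u_{n}^{1}-q\right\Vert $. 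Iterating this same estimate twice more, once for $u_{n}^{1}\rightarrow u_{n}^{2}$ and once for $u_{n}^{2}\rightarrow y_{n}$, yields
\begin{equation*}
\left\Vert y_{n+1}-q\right\Vert \leq \sigma \left\Vert y_{n}-q\right\Vert +\varepsilon _{n},
\end{equation*}
with
\begin{equation*}
\sigma =\left( \sum_{i_{1}=0}^{s_{1}}\alpha _{n,i_{1}}a^{i_{1}}\right) \left( \sum_{i_{2}=0}^{s_{2}}\beta _{n,i_{2}}^{1}a^{i_{2}}\right) \left( \sum_{i_{3}=0}^{s_{3}}\beta _{n,i_{3}}^{2}a^{i_{3}}\right) .
\end{equation*}
I would then verify $\sigma <1$ exactly as in the proofs of Theorems 1 and 2: since $a\in \lbrack 0,1)$ by Remark 3, each factor is a convex combination of $a^{0},a^{1},\ldots ,a^{s_{j}}$ (with $a^{0}=1$) and is strictly less than $1$ whenever any positive mass sits at an index $\geq 1$, so a single such factor already forces the product to be strictly less than $1$. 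An application of Lemma 1 then delivers $y_{n}\rightarrow q$.

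For the converse implication, I would assume $y_{n}\rightarrow q$ and bound, via the reverse triangle inequality,
\begin{equation*}
\varepsilon _{n}\leq \left\Vert y_{n+1}-q\right\Vert +\left\Vert \textstyle\sum_{i_{1}=0}^{s_{1}}\alpha _{n,i_{1}}T^{i_{1}}u_{n}^{1}-q\right\Vert ,
\end{equation*}
and rerun the same three-level recursive estimate on the second term to obtain $\varepsilon _{n}\leq \left\Vert y_{n+1}-q\right\Vert +\sigma \left\Vert y_{n}-q\right\Vert \rightarrow 0$. The main obstacle is purely notational: one has to carry three successive applications of Lemma 2 through the nested sums and collapse each $\varphi $-tail using $\varphi (0)=0$, exactly as in the proof of Theorem 2; no new conceptual input beyond the machinery already developed there is required.
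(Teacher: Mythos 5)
Your proposal is correct and follows essentially the same route as the paper's own proof of this theorem: the same auxiliary sequences $u_{n}^{1}$, $u_{n}^{2}$, the same definition of $\varepsilon _{n}$, three successive applications of Lemma 2 with the $\varphi $-terms vanishing because $q=Tq$, the same $\sigma $ as a product of three convex-combination factors bounded by $1$, Lemma 1 for the forward implication, and the symmetric triangle-inequality estimate for the converse. Your observation that (1.3) has no separated $x_{n}$-term, so the recursion collapses to a single three-factor product rather than the telescoping sum of Theorem 2, matches exactly what the paper does.
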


\begin{proof}
Let $\left\{ y_{n}\right\} _{n\in 
\mathbb{N}
}\subset X$, $\varepsilon _{n}=\left\Vert
y_{n+1}-\sum_{i_{1}=0}^{s_{1}}\alpha _{n,i_{1}}T^{i_{1}}u_{n}^{1}\right\Vert 
$, $n=0,1,2,\ldots $, $u_{n}^{1}=\sum_{i_{2}=0}^{s_{2}}\beta
_{n,i_{2}}^{1}T^{i_{2}}u_{n}^{2}$, and $u_{n}^{2}=\sum_{i_{3}=0}^{s_{3}}%
\beta _{n,i_{3}}^{2}T^{i_{3}}y_{n}$. Assume that $\lim_{n\rightarrow \infty
}\varepsilon _{n}=0$. We shall prove that $\lim_{n\rightarrow \infty
}y_{n}=q $.

It follows from (1.3) and Lemma 2 that%
\begin{eqnarray}
\left\Vert y_{n+1}-q\right\Vert &=&\left\Vert
y_{n+1}-\sum\limits_{i_{1}=0}^{s_{1}}\alpha
_{n,i_{1}}T^{i_{1}}u_{n}^{1}+\sum\limits_{i_{1}=0}^{s_{1}}\alpha
_{n,i_{1}}T^{i_{1}}u_{n}^{1}-q\right\Vert  \notag \\
&\leq &\varepsilon _{n}+\left\Vert \sum\limits_{i_{1}=0}^{s_{1}}\alpha
_{n,i_{1}}\left( T^{i_{1}}u_{n}^{1}-T^{i_{1}}q\right) \right\Vert  \notag \\
&\leq &\varepsilon _{n}+\alpha _{n,0}\left\Vert u_{n}^{1}-q\right\Vert
+\sum\limits_{i_{1}=1}^{s_{1}}\alpha _{n,i_{1}}\left\Vert
T^{i_{1}}u_{n}^{1}-T^{i_{1}}q\right\Vert  \notag \\
&\leq &\varepsilon _{n}+\alpha _{n,0}\left\Vert u_{n}^{1}-q\right\Vert 
\notag \\
&&+\sum\limits_{i_{1}=1}^{s_{1}}\alpha _{n,i_{1}}\left\{
\sum\limits_{j=1}^{i_{1}}\binom{i_{1}}{j}a^{i_{1}-j}\varphi ^{j}\left(
\left\Vert q-Tq\right\Vert \right) +a^{i_{1}}\left\Vert
u_{n}^{1}-q\right\Vert \right\}  \notag \\
&=&\varepsilon _{n}+\left( \sum\limits_{i_{1}=0}^{s_{1}}\alpha
_{n,i_{1}}a^{i_{1}}\right) \left\Vert u_{n}^{1}-q\right\Vert \text{,}
\label{eqn42}
\end{eqnarray}%
\begin{eqnarray}
\left\Vert u_{n}^{1}-q\right\Vert &=&\left\Vert
\sum\limits_{i_{2}=0}^{s_{2}}\beta _{n,i_{2}}^{1}\left(
T^{i_{2}}u_{n}^{2}-T^{i_{2}}q\right) \right\Vert  \notag \\
&\leq &\beta _{n,0}^{1}\left\Vert u_{n}^{2}-q\right\Vert
+\sum\limits_{i_{2}=1}^{s_{2}}\beta _{n,i_{2}}^{1}\left\Vert
T^{i_{2}}u_{n}^{2}-T^{i_{2}}q\right\Vert  \notag \\
&\leq &\beta _{n,0}^{1}\left\Vert u_{n}^{2}-q\right\Vert  \notag \\
&&+\sum\limits_{i_{2}=1}^{s_{2}}\beta _{n,i_{2}}^{1}\left\{
\sum\limits_{j=1}^{i_{2}}\binom{i_{2}}{j}a^{i_{2}-j}\varphi ^{j}\left(
\left\Vert q-Tq\right\Vert \right) +a^{i_{2}}\left\Vert
u_{n}^{2}-q\right\Vert \right\}  \notag \\
&=&\left( \sum\limits_{i_{2}=0}^{s_{2}}\beta _{n,i_{2}}^{1}a^{i_{2}}\right)
\left\Vert u_{n}^{2}-q\right\Vert \text{,}  \label{eqn43}
\end{eqnarray}%
and%
\begin{eqnarray}
\left\Vert u_{n}^{2}-q\right\Vert &=&\left\Vert
\sum\limits_{i_{3}=0}^{s_{3}}\beta _{n,i_{3}}^{2}\left(
T^{i_{3}}y_{n}-T^{i_{3}}q\right) \right\Vert  \notag \\
&\leq &\beta _{n,0}^{2}\left\Vert y_{n}-q\right\Vert
+\sum\limits_{i_{3}=1}^{s_{3}}\beta _{n,i_{3}}^{2}\left\Vert
T^{i_{3}}y_{n}-T^{i_{3}}q\right\Vert  \notag \\
&\leq &\beta _{n,0}^{2}\left\Vert y_{n}-q\right\Vert  \notag \\
&&+\sum\limits_{i_{3}=1}^{s_{3}}\beta _{n,i_{3}}^{2}\left\{
\sum\limits_{j=1}^{i_{3}}\binom{i_{3}}{j}a^{i_{3}-j}\varphi ^{j}\left(
\left\Vert q-Tq\right\Vert \right) +a^{i_{3}}\left\Vert y_{n}-q\right\Vert
\right\}  \notag \\
&=&\left( \sum\limits_{i_{3}=0}^{s_{3}}\beta _{n,i_{3}}^{2}a^{i_{3}}\right)
\left\Vert y_{n}-q\right\Vert \text{.}  \label{eqn44}
\end{eqnarray}%
Combining (2.29), (2.30), and (2.31) we get%
\begin{equation}
\left\Vert y_{n+1}-q\right\Vert \leq \varepsilon _{n}+\left(
\sum\limits_{i_{1}=0}^{s_{1}}\alpha _{n,i_{1}}a^{i_{1}}\right) \left(
\sum\limits_{i_{2}=0}^{s_{2}}\beta _{n,i_{2}}^{1}a^{i_{2}}\right) \left(
\sum\limits_{i_{3}=0}^{s_{3}}\beta _{n,i_{3}}^{2}a^{i_{3}}\right) \left\Vert
y_{n}-q\right\Vert \text{.}  \label{eqn45}
\end{equation}%
Define%
\begin{equation}
\sigma :=\left( \sum\limits_{i_{1}=0}^{s_{1}}\alpha
_{n,i_{1}}a^{i_{1}}\right) \left( \sum\limits_{i_{2}=0}^{s_{2}}\beta
_{n,i_{2}}^{1}a^{i_{2}}\right) \left( \sum\limits_{i_{3}=0}^{s_{3}}\beta
_{n,i_{3}}^{2}a^{i_{3}}\right) \text{.}  \label{eqn46}
\end{equation}%
Thus we can rewrite (2.32) as follows%
\begin{equation}
\left\Vert y_{n+1}-q\right\Vert \leq \sigma \left\Vert y_{n}-q\right\Vert
+\varepsilon _{n}\text{.}  \label{eqn47}
\end{equation}%
We now show that $\sigma \in \left( 0,1\right) $. Since $a^{i_{k}}\in \left[
0,1\right) $, $\alpha _{n,0}>0$, $\sum\limits_{i_{1}=0}^{s_{1}}\alpha
_{n,i_{1}}=1$ and $\sum\limits_{i_{p+1}=0}^{s_{p+1}}\beta _{n,i_{p+1}}^{p}=1$
for $p=\overline{1,k-1}$%
\begin{equation}
\sigma <\left( \sum\limits_{i_{1}=0}^{s_{1}}\alpha _{n,i_{1}}\right) \left(
\sum\limits_{i_{2}=0}^{s_{2}}\beta _{n,i_{2}}^{1}\right) \left(
\sum\limits_{i_{3}=0}^{s_{3}}\beta _{n,i_{3}}^{2}\right) =1\text{.}
\label{eqn48}
\end{equation}

Therefore, an application of Lemma 1 to (2.34) yields $\lim_{n\rightarrow
\infty }y_{n}=q$.

Now suppose that $\lim_{n\rightarrow \infty }y_{n}=q$. Then we shall show
that $\lim_{n\rightarrow \infty }\varepsilon _{n}=0$.

Using Lemma 2 we have%
\begin{eqnarray}
\varepsilon _{n} &=&\left\Vert y_{n+1}-\sum\limits_{i_{1}=0}^{s_{1}}\alpha
_{n,i_{1}}T^{i_{1}}u_{n}^{1}\right\Vert  \notag \\
&\leq &\left\Vert y_{n+1}-q\right\Vert +\left\Vert
\sum\limits_{i_{1}=0}^{s_{1}}\alpha _{n,i_{1}}\left(
T^{i_{1}}q-T^{i_{1}}u_{n}^{1}\right) \right\Vert  \notag \\
&\leq &\left\Vert y_{n+1}-q\right\Vert +\alpha _{n,0}\left\Vert
q-u_{n}^{1}\right\Vert +\sum\limits_{i_{1}=1}^{s_{1}}\alpha
_{n,i_{1}}\left\Vert T^{i_{1}}q-T^{i_{1}}u_{n}^{1}\right\Vert  \notag \\
&\leq &\left\Vert y_{n+1}-q\right\Vert +\alpha _{n,0}\left\Vert
q-u_{n}^{1}\right\Vert  \notag \\
&&+\sum\limits_{i_{1}=1}^{s_{1}}\alpha _{n,i_{1}}\left\{
\sum\limits_{j=1}^{i_{1}}\binom{i_{1}}{j}a^{i_{1}-j}\varphi ^{j}\left(
\left\Vert q-Tq\right\Vert \right) +a^{i_{1}}\left\Vert
q-u_{n}^{1}\right\Vert \right\}  \notag \\
&\leq &\left\Vert y_{n+1}-q\right\Vert +\left(
\sum\limits_{i_{1}=0}^{s_{1}}\alpha _{n,i_{1}}a^{i_{1}}\right) \left\Vert
q-u_{n}^{1}\right\Vert \text{,}  \label{eqn49}
\end{eqnarray}%
\begin{eqnarray}
\left\Vert q-u_{n}^{1}\right\Vert &=&\left\Vert
\sum\limits_{i_{2}=0}^{s_{2}}\beta _{n,i_{2}}^{1}\left(
T^{i_{2}}q-T^{i_{2}}u_{n}^{2}\right) \right\Vert  \notag \\
&\leq &\beta _{n,0}^{1}\left\Vert q-u_{n}^{2}\right\Vert
+\sum\limits_{i_{2}=1}^{s_{2}}\beta _{n,i_{2}}^{1}\left\Vert
T^{i_{2}}q-T^{i_{2}}u_{n}^{2}\right\Vert  \notag \\
&\leq &\beta _{n,0}^{1}\left\Vert q-u_{n}^{2}\right\Vert  \notag \\
&&+\sum\limits_{i_{2}=1}^{s_{2}}\beta _{n,i_{2}}^{1}\left\{
\sum\limits_{j=1}^{i_{2}}\binom{i_{2}}{j}a^{i_{2}-j}\varphi ^{j}\left(
\left\Vert q-Tq\right\Vert \right) +a^{i_{2}}\left\Vert
q-u_{n}^{2}\right\Vert \right\}  \notag \\
&\leq &\left( \sum\limits_{i_{2}=0}^{s_{2}}\beta
_{n,i_{2}}^{1}a^{i_{2}}\right) \left\Vert q-u_{n}^{2}\right\Vert \text{,}
\label{eqn50}
\end{eqnarray}%
and%
\begin{eqnarray}
\left\Vert q-u_{n}^{2}\right\Vert &=&\left\Vert
\sum\limits_{i_{3}=0}^{s_{3}}\beta _{n,i_{3}}^{2}\left(
T^{i_{3}}q-T^{i_{3}}y_{n}\right) \right\Vert  \notag \\
&\leq &\beta _{n,0}^{2}\left\Vert q-y_{n}\right\Vert
+\sum\limits_{i_{3}=1}^{s_{3}}\beta _{n,i_{3}}^{2}\left\Vert
T^{i_{3}}q-T^{i_{3}}y_{n}\right\Vert  \notag \\
&\leq &\beta _{n,0}^{2}\left\Vert q-y_{n}\right\Vert  \notag \\
&&+\sum\limits_{i_{3}=1}^{s_{3}}\beta _{n,i_{3}}^{2}\left\{
\sum\limits_{j=1}^{i_{3}}\binom{i_{3}}{j}a^{i_{3}-j}\varphi ^{j}\left(
\left\Vert q-Tq\right\Vert \right) +a^{i_{3}}\left\Vert q-y_{n}\right\Vert
\right\}  \notag \\
&=&\left( \sum\limits_{i_{3}=0}^{s_{3}}\beta _{n,i_{3}}^{2}a^{i_{3}}\right)
\left\Vert y_{n}-q\right\Vert \text{.}  \label{eqn51}
\end{eqnarray}%
It follow from (2.36), (2.37), and (2.38) that%
\begin{equation}
\varepsilon _{n}\leq \left\Vert y_{n+1}-q\right\Vert +\left(
\sum\limits_{i_{1}=0}^{s_{1}}\alpha _{n,i_{1}}a^{i_{1}}\right) \left(
\sum\limits_{i_{2}=0}^{s_{2}}\beta _{n,i_{2}}^{1}a^{i_{2}}\right) \left(
\sum\limits_{i_{3}=0}^{s_{3}}\beta _{n,i_{3}}^{2}a^{i_{3}}\right) \left\Vert
y_{n}-q\right\Vert \text{.}  \label{eqn52}
\end{equation}%
Again define%
\begin{equation}
\sigma :=\left( \sum\limits_{i_{1}=0}^{s_{1}}\alpha
_{n,i_{1}}a^{i_{1}}\right) \left( \sum\limits_{i_{2}=0}^{s_{2}}\beta
_{n,i_{2}}^{1}a^{i_{2}}\right) \left( \sum\limits_{i_{3}=0}^{s_{3}}\beta
_{n,i_{3}}^{2}a^{i_{3}}\right) \text{.}  \label{eqn53}
\end{equation}%
Using same argument as that of first part of the proof we obtain $\sigma \in
\left( 0,1\right) $ and it thus follows from assumption $\lim_{n\rightarrow
\infty }y_{n}=q$ that $\varepsilon _{n}\rightarrow 0$ as $n\rightarrow
\infty $.
\end{proof}

\begin{remark}
Theorem 1 is a generalization and extension of both Theorem 1 and Theorem 2
of Berinde \cite{Berinde}, Theorem 2 and Theorem 3 of Kannan \cite{Kannan},
Theorem 3 of Kannan \cite{Kannan1}, Theorem 4 of Rhoades \cite{Rhds1},
Theorem 8 of Rhoades \cite{Rhds2}, Theorem 2.1 of Olatinwo \cite{Olatinwo0},
Theorem 2.6 of Hussain et al \cite{Hussain et al}, and Theorem 3.1 of \c{S}%
oltuz and Grosan \cite{Data Is 2}. Theorems 2 is a generalization and
extension of Theorem 2 of Osilike \cite{Osilike1}, Theorem 2 and Theorem 5
of Osilike and Udomene \cite{Osilike} as well as Theorem 3 of Olatinwo et
al. \cite{Olantiwo4}, Theorem 3.1 and Theorem 3.2 of Olatinwo \cite{Olatinwo}
and Theorem 3.1 of Chugh and Kumar \cite{CR}.
\end{remark}

\begin{acknowledgement}
The first two authors would like to thank Y\i ld\i z Technical University
Scientific Research Projects Coordination Unit under project number BAPK
2012-07-03-DOP02 for financial support during the preparation of this
manuscript.
\end{acknowledgement}

\end{document}